\newtheorem{theorem}{Theorem}[section]
\newtheorem{corollary}[theorem]{Corollary}
\newtheorem{lemma}[theorem]{Lemma}
\newtheorem{proposition}[theorem]{Proposition}
\newtheorem*{theorem*}{Theorem}
\newtheorem*{lemma*}{Lemma}
\newtheorem*{remark*}{Remark}
\newtheorem*{definition*}{Definition}
\newtheorem*{proposition*}{Proposition}
\newtheorem*{corollary*}{Corollary}
\numberwithin{equation}{section}
\newcommand{\real}{\mathbb{R}}
\let\ced=\c         
\def\e{\varepsilon}        
\def\qed{\,\unskip\kern 6pt \penalty 500
\raise -2pt\hbox{\vrule \vbox to8pt{\hrule width 6pt
\vfill\hrule}\vrule}\par}
\definecolor{darkblue}{rgb}{0.05, .05, .65}
\definecolor{darkgreen}{rgb}{0.1, .65, .1}
\definecolor{darkred}{rgb}{0.8,0,0}
\newcommand{\beqn}{\begin{equation}}
\newcommand{\eeqn}{\end{equation}}
\newcommand{\bear}{\begin{eqnarray}}
\newcommand{\eear}{\end{eqnarray}}
\newcommand{\bean}{\begin{eqnarray*}}
\newcommand{\eean}{\end{eqnarray*}}
\begin{document}

\title{\huge \bf Large time behavior for a quasilinear diffusion equation with critical gradient absorption}

\author{
\Large Razvan Gabriel Iagar\,\footnote{Instituto de Ciencias
Matem\'aticas (ICMAT), Nicolas Cabrera 13-15, Campus de Cantoblanco,
Madrid, Spain, \textit{e-mail:}
razvan.iagar@icmat.es},\footnote{Institute of Mathematics of the
Romanian Academy, P.O. Box 1-764, RO-014700, Bucharest, Romania.}
\\[4pt] \Large Philippe Lauren\c cot\,\footnote{Institut de
Math\'ematiques de Toulouse, CNRS UMR~5219, Universit\'e de
Toulouse, F--31062 Toulouse Cedex 9, France. \textit{e-mail:}
Philippe.Laurencot@math.univ-toulouse.fr}\\ [4pt] }
\date{\today}
\maketitle

\begin{abstract}
We study the large time behavior of non-negative solutions to the
nonlinear diffusion equation with critical gradient absorption
$$
\partial_t u-\Delta_{p}u+|\nabla u|^{q_*}=0 \quad \hbox{in} \
(0,\infty)\times\real^N\ ,
$$
for $p\in(2,\infty)$ and $q_*:=p-N/(N+1)$. We show that the
asymptotic profile of compactly supported solutions is given by a
source-type self-similar solution of the $p$-Laplacian equation with suitable logarithmic time and space scales. In the process, we also get optimal decay rates for compactly supported solutions and optimal expansion rates for their supports that strongly improve previous results.
\end{abstract}

\vspace{2.0 cm}

\noindent {\bf AMS Subject Classification:} 35K59, 35K65, 35K92,
35B40.

\medskip

\noindent {\bf Keywords:} large time behavior, degenerate diffusion,
gradient absorption, decay estimates, logarithmic scales.

\newpage

\section{Introduction and results}\label{sec1}

The goal of the present paper is to study the large time behavior of
non-negative solutions to the following equation which combines a
nonlinear diffusion and a gradient absorption term:
\begin{eqnarray}
\partial_t u-\Delta_p u+|\nabla u|^{q_*} & = & 0 \quad {\rm in} \
(0,\infty)\times\real^N\ , \label{eq1} \\
u(0) & = & u_0 \quad  {\rm in} \ \real^N\ , \label{eq1in}
\end{eqnarray}
where the $p$-Laplacian $\Delta_p u$ is given by $\Delta_p u:={\rm div}(|\nabla u|^{p-2}\nabla u)$ for $p\in (2,\infty)$, and the critical exponent is $q_*:=p-N/(N+1)>1$. We consider the Cauchy problem \eqref{eq1}-\eqref{eq1in} for initial data $u_0$ satisfying
\begin{equation}\label{initdata1}
u_0\in W^{1,\infty}(\real^N), \quad u_0\geq0, \ u_0\not\equiv0,
\quad {\rm supp}\,u_0 \ {\rm is \ compact \ in} \ \real^N.
\end{equation}

The theory for the more general equation
\begin{equation*}
\partial_tu-\Delta_p u+|\nabla u|^q=0, \quad {\rm in} \
(0,\infty)\times\real^N,
\end{equation*}
with general values of the exponents $p$ and $q$, developed very
quickly in the last years, after understanding better how the
competition between the two effects---nonlinear diffusion in form of a $p$-Laplacian term, and first order absorption---works with
respect to different ranges of these exponents.

The semilinear problem, when $p=2$, has been understood first, due
to the possibility of using, at least partially, techniques related
to the heat equation. It has been noticed that there exists two
critical values of the exponent $q$, namely $q=1$ and $q=q_*=(N+2)/(N+1)$, and the qualitative theory together with the large time behavior are now well understood after a series of works \cite{ATU04, BSW02, BL99, BVD13, BGK04, Gi05, GGK03}. More precisely, for $q>q_*$ we have an asymptotic simplification with dominating diffusion, meaning that the absorption plays no role in the large time behavior, while for $1<q<q_*$, there exists a special, unique self-similar solution of \emph{very singular} type, which is the asymptotic profile for the evolution \cite{BKaL04}. The critical case corresponding to $q=q_*$ requires a different treatment and has been investigated in \cite{GL07} using central manifold theory techniques. Finally, the case $q=1$ is the subject of a
well-known open problem (see \cite{BRV97} for the best estimates
available), while for $q\in (0,1)$, finite time extinction occurs \cite{BLS01, BLSS02, Gi05}.

As heat equation techniques are not anymore available for
$p>2$, its study is more involved and it came later. In this range, the qualitative behavior is very different: on the one hand, the nonlinear slow diffusion implies a finite speed of propagation which entails that solutions emanating from compactly supported initial data stay compactly supported for all times. On the other hand, there exists a range of exponents $q\in(1,p-1]$ where the dynamics is governed by the Hamilton-Jacobi part, a new fact that does not appear for $p=2$ and $q>1$; indeed, for $q\in(1,p-1]$, the large time behavior is given by profiles in form of "sandpiles" or "regularized sandpiles", reminding of the solutions to Hamilton-Jacobi equations, see \cite{ILV, LV07}. For higher values of the exponent $q$, asymptotic simplification with dominating diffusion is expected for $q>q_*=p-N/(N+1)$ and the existence of very singular self-similar solutions is known for $q\in(p-1,q_*)$ \cite{Shi}, the lack of a uniqueness result preventing a complete understanding of the large time asymptotics in that case. In the present work we complete the panorama of the slow diffusion case by studying the case $q=q_*$, where finer estimates for the solution than the general ones in \cite{BtL08} are needed.

We finally mention that, for $p\in (1,2)$, a qualitative theory is developed starting from the paper \cite{IL12},
where a new critical exponent $q=p/2$ is discovered, and the
qualitative theory for any $q>0$ is established. In particular,
there are again critical exponents $q=q_*$ and $q=p/2$, limiting
ranges of parameters with different behaviors: diffusive for
$q>q_*$, algebraic decay for $q\in(q_*,p/2)$, exponential decay for
$q=p/2$, finite time extinction for $q<p/2$. Recent work by the
authors helped to establish the existence of special solutions and
the large time behavior for some of these cases, such as
$q\in(q_*,p/2)$, with very singular self-similar solutions, see
\cite{IL13a, IL14}, and $q=p/2$ where \emph{eternal solutions} have
been discovered \cite{IL13b}. The critical case $q=q_*$ is still
open in this range.

\medskip

\noindent \textbf{Main results.} Coming back to Eq.~\eqref{eq1}, our goal is to determine a profile that the solutions approach as
$t\to\infty$. As we are in a critical case which plays the role of
an interface between purely diffusive behavior and mixed-type
behavior, some logarithmic time scales are expected to appear.

We introduce the following constant which will be important in the
analysis
\begin{equation}\label{eta}
\eta:=\frac{1}{N(p-2)+p}=\frac{1}{p(N+1)-2N}.
\end{equation}
We are now in a position to state our main asymptotic result.

\begin{theorem}\label{th.1}
Consider an initial condition $u_0$ satisfying \eqref{initdata1},
$q=q_*=p-N/(N+1)$, and let $u$ be the solution to the Cauchy problem \eqref{eq1}-\eqref{eq1in} with initial condition $u_0$. Then
\begin{equation*}
\lim\limits_{t\to\infty}t^{N\eta}(\log
t)^{p(N+1)\eta}\left|u(t,x)-\frac{1}{t^{N\eta}(\log
t)^{p(N+1)\eta}}B_{A_*}\left(\frac{x}{t^{\eta}(\log
t)^{(2-p)(N+1)\eta}}\right)\right|=0,
\end{equation*}
where
$$
B_A(y)=\left(A-B_0|y|^{p/(p-1)}\right)_+^{(p-1)/(p-2)}, \quad
B_0=\frac{p-2}{p}\eta^{1/(p-1)},
$$
and $A_*$ is uniquely determined and given by:
\begin{equation}\label{interm0}
A_*:=\left(\frac{\displaystyle{(N+1)\int_0^{\infty}\left(1-B_0r^{p/(p-1)}\right)_+^{(p-1)/(p-2)}r^{N-1}\,dr}}
{\displaystyle{\eta^{q/(p-1)}\int_0^{\infty}\left(1-B_0r^{p/(p-1)}\right)_+^{q/(p-2)}r^{N-1+q/(p-1)}\,dr}}\right)^{p(p-2)(N+1)\eta/(p-1)}.
\end{equation}
\end{theorem}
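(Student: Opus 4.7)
The plan is to introduce a self-similar rescaling with logarithmic corrections that turns equation \eqref{eq1} into a small perturbation of the stationary self-similar $p$-Laplacian equation. Concretely, I would set
$$
v(\tau,y) := t^{N\eta}(\log t)^{p(N+1)\eta}\, u(t,x), \qquad y := \frac{x}{t^{\eta}(\log t)^{(2-p)(N+1)\eta}}, \qquad \tau := \log t,
$$
and compute that $v$ satisfies an equation of the form
$$
\partial_{\tau} v = \Delta_p v + \eta\, \nabla\cdot(y v) + R(\tau,v,\nabla v),
$$
where $R$ gathers the corrections produced by differentiating the logarithmic factors and the rescaled absorption term $|\nabla v|^{q_*}$. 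The exponents $p(N+1)\eta$ and $(2-p)(N+1)\eta$ are chosen precisely so that at the critical value $q=q_*$ these two contributions enter $R$ at order $1/\tau$ and vanish in the limit. The stationary solutions of the unperturbed equation are exactly the Barenblatt profiles $B_A$, $A\geq 0$, which are therefore the natural candidates for the limit of $v$ as $\tau\to\infty$.

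Before passing to the limit I would establish the sharp two-sided decay and support bounds matching this scaling (the ``optimal estimates'' announced in the abstract). Upper bounds of the form $\|u(t)\|_{\infty}\leq C\, t^{-N\eta}(\log t)^{-p(N+1)\eta}$ and $\supp u(t,\cdot)\subset\{|x|\leq C\, t^{\eta}(\log t)^{(2-p)(N+1)\eta}\}$ should follow by constructing Barenblatt-type supersolutions whose parameter $A$ depends slowly on time through a logarithmic ODE, and then invoking the comparison principle. Matching lower bounds are the more delicate half and should rely on the mass identity
$$
\frac{d}{dt}\int_{\real^N} u(t,x)\,dx = -\int_{\real^N}|\nabla u(t,x)|^{q_*}\,dx,
$$
together with interior Harnack-type positivity estimates for the $p$-Laplacian, to prevent mass from being dissipated faster than the critical logarithmic rate.

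With these estimates the rescaled family $\{v(\tau,\cdot)\}_{\tau\geq\tau_0}$ is uniformly bounded with uniformly compactly supported profiles. Parabolic regularity results of DiBenedetto type then provide equicontinuity of $v$, and an Arzel\`a--Ascoli argument yields convergent subsequences $v(\tau_k+\cdot,\cdot)\to\bar v$ locally uniformly along any sequence $\tau_k\to\infty$. Passing to the limit in the perturbed equation, using that $R(\tau,\cdot,\cdot)\to 0$, shows that $\bar v$ is a stationary solution of the self-similar $p$-Laplacian equation, hence $\bar v = B_A$ for some $A\geq 0$.

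The main obstacle is then the identification $A=A_*$, since compactness alone only selects $A$ up to a subsequence. The key tool is again the mass identity. Inserting the Barenblatt ansatz and tracking coefficients shows that $M(t):=\int u\,dx$ has leading order $(\log t)^{-(N+1)}$ with a prefactor proportional to the numerator of the bracket in \eqref{interm0} (the mass of $B_1$), whereas time-integrating the absorption term $\int|\nabla u|^{q_*}\,dx$ produces the same $(\log t)^{-(N+1)}$ behaviour with prefactor proportional to the denominator divided by $N+1$; the closed-form identity $|\nabla B_A|^{q_*} = \eta^{q_*/(p-1)}(A-B_0|y|^{p/(p-1)})_+^{q_*/(p-2)}|y|^{q_*/(p-1)}$, enforced by $B_0=\frac{p-2}{p}\eta^{1/(p-1)}$, is what makes these coefficients take the form displayed in \eqref{interm0}. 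Equating the two prefactors yields an algebraic equation for $A$ whose unique positive solution is precisely $A_*$. Combined with the subsequential compactness, this forces the limit to be $B_{A_*}$ independently of the chosen sequence, and hence proves the full convergence.
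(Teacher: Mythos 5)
Your overall architecture matches the paper's (the same logarithmically corrected rescaling, compactness of the rescaled orbit, and a mass balance to pin down $A_*$), but the mechanism you propose for the decisive step --- the optimal upper bounds $\|u(t)\|_\infty\le C\,t^{-N\eta}(\log t)^{-p(N+1)\eta}$ and the support estimate --- does not work. A Barenblatt profile with slowly decaying amplitude $A(t)$ cannot be a supersolution of \eqref{eq1} capturing the logarithmic gain: at the top of the profile the gradient vanishes, so the absorption term $|\nabla \bar u|^{q_*}$ contributes nothing there and the supersolution inequality degenerates to the pure $p$-Laplacian one, which is violated as soon as $A(t)$ decays faster than the self-similar rate; the same degeneracy occurs at the free boundary, where $(A-B_0|y|^{p/(p-1)})_+\to 0$ kills the gradient contribution relative to the zeroth-order terms. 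This is visible in the computation of Lemma~\ref{lem.below}: the bracket $\eta^{(q_*-p+1)/(p-1)}(A-B_0|y|^{p/(p-1)})_+^{(q_*-1)/(p-2)}|y|^{q_*/(p-1)}-(N+1)pA$ is negative near $y=0$ and near the interface for \emph{every} $A>0$, which is precisely why $B_A$ is a \emph{sub}solution (the paper exploits this for the lower bound) and can never be promoted to a supersolution of the proposed form. The paper instead obtains the upper bounds by an integral argument (Proposition~\ref{prop.improv}): a bound on the interface speed in terms of $\|U(t_0)\|_1$ coming from the gradient estimate of \cite{BtL08}, coupled with the Poincar\'e inequality of Lemma~\ref{lem.ineq} on $B(0,\sigma(t))$ to close a differential inequality for $\|U(t)\|_1$, and a bootstrap on $\Sigma(T)=\sup_{[1,T]}t^{-\eta}(\log t)^A\sigma(t)$. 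This is the main technical novelty of the paper, and your proposal has no working substitute for it.

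Two further gaps. First, letting $\tau_k\to\infty$ and using $R(\tau,\cdot,\cdot)\to 0$ only shows that limits of $v(\tau_k+\cdot)$ solve the \emph{autonomous} equation; it does not make them stationary. To conclude that subsequential limits are Barenblatt profiles one needs the uniform-stability hypothesis \textbf{(H3)} of the Galaktionov--V\'azquez S-theorem for the rescaled $p$-Laplacian flow (or an equivalent Lyapunov structure); this is exactly how the paper derives $\omega(w)\subset\Omega$. Second, your identification of $A_*$ by ``equating prefactors'' is the right heuristic --- the condition $(N+1)\|B_A\|_1=\|\nabla B_A\|_{q_*}^{q_*}$ --- but making it rigorous requires convergence of gradients $\nabla w(s)-\nabla B_{A(s)}\to 0$ (hence the DiBenedetto--Friedman H\"older estimates, since the dissipation functional involves $\|\nabla z\|_{q_*}^{q_*}$) and the sign information that $g(a)>0$ for $a<A_*$ and $g(a)<0$ for $a>A_*$, combined with the non-integrability of $1/s$, to force $A(s)\to A_*$; otherwise a subsequential limit could still be a Barenblatt with the ``wrong'' mass. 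Relatedly, the lower bound via ``Harnack-type positivity'' is left unsubstantiated, whereas the subsolution $B_{A_0}$ both yields it and provides the uniform lower barrier defining the metric space on which the stability theorem is applied.
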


Let us remark that $B_A$ is the well-known Barenblatt profile, the
fundamental solution (and also the asymptotic profile) of the pure
diffusion equation (without the gradient absorption term). The
effect of absorption is seen in the fact that both the time-decay
rate and the expansion of supports are changed with respect to the
$p$-Laplacian equation.

In order to prove Theorem~\ref{th.1}, we have noticed that the
precise time decay rate of the solutions was missing from the
theory. Indeed, it follows from \cite{BtL08} that the $L^1$-norm of
the solutions to the Cauchy problem
\eqref{eq1}-\eqref{eq1in} with initial condition $u_0$ satisfying
\eqref{initdata1} decays with a logarithmic rate as
$t\to\infty$; however, as we will prove, the rate in \cite{BtL08} is
not optimal and it can be improved. Based on that, we find the exact
logarithmic correction for the profile. Once optimal estimates are
obtained, the final step will be an application of the stability
technique of Galaktionov and V\'azquez \cite{GVaz, GVazBook}
together with some analysis concerning the variation of the
$L^1$-norm of the solution, which is needed to establish the
uniqueness (and precise parameter $A=A_*$) of the asymptotic
profile.

\medskip

A by-product of our analysis is the following expansion property of the positivity set of non-negative compactly supported solutions to \eqref{eq1}.

\begin{proposition}\label{pr.posexp}
Consider an initial condition $u_0$ satisfying \eqref{initdata1},
$q=q_*=p-N/(N+1)$, and let $u$ be the solution to the Cauchy problem for the equation \eqref{eq1} with initial condition $u_0$. Introducing the positivity set
$$
\mathcal{P}(t) := \{ x\in\real^N\ :\ u(t,x)>0 \}
$$
of $u$ at time $t\ge 0$, there are $\varrho_2>\varrho_1>0$ such that
\begin{equation*}
B\left( 0 , \varrho_1 t^{\eta}(\log t)^{(2-p)(N+1)\eta} \right)
\subset \mathcal{P}(t) \subset B\left( 0 , \varrho_2 t^{\eta}(\log t)^{(2-p)(N+1)\eta}
\right) \ , \quad t\ge 1\ .
\end{equation*}
\end{proposition}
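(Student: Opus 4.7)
To prove both inclusions I would use the comparison principle with self-similar sub- and supersolutions whose scaling matches the one identified in Theorem~\ref{th.1}. Introduce
$$
\alpha(t) := t^{-N\eta}(\log t)^{-p(N+1)\eta}, \qquad \beta(t) := t^{\eta}(\log t)^{(2-p)(N+1)\eta},
$$
and, for a parameter $A>0$, consider the ansatz $W_A(t,x) := \alpha(t)\, B_A(x/\beta(t))$ built from the Barenblatt profile $B_A$. On its positivity set, the Barenblatt identity $-\Delta_p B_A = N\eta\, B_A + \eta\, y\cdot\nabla B_A$ and a direct computation give
$$
\partial_t W_A - \Delta_p W_A + |\nabla W_A|^{q_*} = \frac{1}{t\log t}\,\mathcal{R}_A\!\left(\frac{x}{\beta(t)}\right) + o\!\left(\frac{1}{t\log t}\right),
$$
and the critical value $q_*=p-N/(N+1)$ is precisely the one for which the absorption term enters at the same order $(t\log t)^{-1}$ as the remainder produced by the logarithmic factors. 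The continuous function $\mathcal{R}_A$ is negative on compact subsets of the positivity set for $A$ large and positive for $A$ small, so $W_A$ may be tuned into either a supersolution or a subsolution.

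\textbf{Outer inclusion.} I would pick $A^+$ large enough that $W_{A^+}$ is a supersolution of \eqref{eq1} on some $(t_0,\infty)\times\real^N$. Using the optimal $L^\infty$-decay estimate $\|u(t)\|_\infty\le C\,\alpha(t)$, established prior to this proposition, together with the compact support of $u(t_0,\cdot)$, a further enlargement of $A^+$ guarantees $W_{A^+}(t_0,\cdot)\ge u(t_0,\cdot)$ on $\real^N$. The comparison principle then yields $u(t,\cdot)\le W_{A^+}(t,\cdot)$ for all $t\ge t_0$, whence $\mathcal{P}(t)\subset B(0,(A^+/B_0)^{(p-1)/p}\beta(t))$; the range $t\in[1,t_0]$ is absorbed by enlarging $\varrho_2$ accordingly.

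\textbf{Inner inclusion.} Dually, I would pick $A^-$ small so that $W_{A^-}$ is a subsolution on some $(t_0,\infty)\times\real^N$. The delicate step is then to find $t_0$ at which the pointwise bound $u(t_0,\cdot)\ge W_{A^-}(t_0,\cdot)$ holds on $\real^N$; this rests on a quantitative positivity estimate for $u$ on a ball around the origin containing $\supp W_{A^-}(t_0,\cdot)$, obtained from the finite-propagation and positivity-spreading properties for degenerate parabolic equations of $p$-Laplacian type, combined with a lower bound on the mass of $u$ derived during the proof of the optimal decay rate. The comparison principle then gives $\mathcal{P}(t)\supset\supp W_{A^-}(t,\cdot)=B(0,(A^-/B_0)^{(p-1)/p}\beta(t))$ for $t\ge t_0$, and one extracts $\varrho_1$ valid for all $t\ge 1$ as before.

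\textbf{Main obstacle.} The hard part is the inner inclusion. The subsolution construction is a balancing computation in which the specific exponents $N\eta$ and $p(N+1)\eta$ of the $t$- and $\log t$-scales cancel the leading time-derivative terms, leaving the sign of the $(t\log t)^{-1}$ remainder to be controlled by the choice of $A^-$; any mismatch in the logarithmic exponents would destroy the balance, which is precisely why these scales are forced upon us. Moreover, the initial pointwise lower bound $u(t_0,\cdot)\ge W_{A^-}(t_0,\cdot)$ cannot be inferred from Theorem~\ref{th.1} since that result is not yet available, and must be obtained independently via propagation of positivity and control of the $L^1$-norm of $u$.
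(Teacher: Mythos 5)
Your argument for the \emph{outer} inclusion has a genuine gap: no rescaled Barenblatt profile $W_A(t,x)=\alpha(t)B_A(x/\beta(t))$ is a supersolution of \eqref{eq1}, no matter how large $A$ is taken. The exact computation is the one carried out in Lemma~\ref{lem.below}: since $LB_A=0$, the residual of $B_A$ in the rescaled equation \eqref{eq.resc} is
\[
-s\mathcal{L}B_A=|\nabla B_A|^{q_*}+\eta(p-2)(N+1)\,y\cdot\nabla B_A-\eta p(N+1)B_A\ ,
\]
and at $y=0$ this equals $-\eta p(N+1)A^{(p-1)/(p-2)}<0$ for every $A>0$, because the only term capable of producing a positive sign, the absorption term $|\nabla B_A|^{q_*}$, vanishes there; the same failure occurs near the interface $|y|=(A/B_0)^{(p-1)/p}$, where the factor $\left(A-B_0|y|^{p/(p-1)}\right)_+^{(q_*-1)/(p-2)}$ tends to zero while $-(N+1)pA$ does not. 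So $B_A$ is a strict subsolution both at the origin and near its own free boundary, your claim that $\mathcal{R}_A<0$ on the positivity set for large $A$ is false, and the comparison $u\le W_{A^+}$ cannot be launched. The paper obtains the outer inclusion by an entirely different mechanism, namely the support estimate \eqref{supp.est2} of Proposition~\ref{prop.improv}: the support radius $\sigma(t)$ of a radial majorant is controlled by combining the gradient estimate \eqref{interm1} and the $L^1$--$L^\infty$ smoothing \eqref{interm2} into the moving-boundary bound \eqref{interm3}, which is then coupled with the $L^1$-dissipation identity and the Poincar\'e inequality of Lemma~\ref{lem.ineq} in a bootstrap for $\sup_t t^{-\eta}(\log t)^{\eta(p-2)(N+1)}\sigma(t)$. (If one insists on a barrier, it must have a different shape -- e.g.\ a power tail $K|y|^{-p(N+1)}$ used only on the exterior of a ball -- not a Barenblatt.)

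Your \emph{inner} inclusion does follow the paper's route: $B_A$ with $A$ small is a subsolution of \eqref{eq.resc} (Lemma~\ref{lem.below}), and once a pointwise bound $u(t_0,\cdot)\ge\alpha(t_0)B_{A^-}(\cdot/\beta(t_0))$ is secured, comparison propagates it and yields the inner ball; Proposition~\ref{pr.posexp} is then just \eqref{z4} rewritten in the original variables. The initial positivity step, which you leave as a black box appealing to positivity-spreading results, is handled in the paper without any such input: one translates the origin to a point where $u_0\ge\varepsilon_0$ on a small ball, fits a small Barenblatt below the translated datum, propagates it by the same subsolution comparison, and observes that the resulting expanding lower bound eventually covers the original origin (Proposition~\ref{pr.lowerbound}). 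You should make that step explicit, but the idea is sound; the part of your proof that must be replaced is the outer inclusion.
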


\medskip

\noindent \emph{Organisation of the paper.} We begin by establishing our optimal time decay rate  for the solutions to \eqref{eq1}, together with the rate of expansion of supports. As we already mentioned, they strongly improve existing results and we consider them as the main theoretical novelty of the present work. This is done in Section~\ref{sec.sd}, making use of a fine analysis of the supports and functional inequalities of H\"older and Poincar\'e type. The precise estimates are given in Proposition~\ref{prop.improv} and Corollary~\ref{cor.est}. These sharp estimates give in particular the correct time scales, thus allowing for a rescaling to reach a new equation with solutions being uniformly bounded in $L^1(\real^N)\cap W^{1,\infty}(\real^N)$. The scaling step is performed in Section~\ref{sec.scaling}, where we also show that the Barenblatt profile itself provides a suitable subsolution, which is fundamental in the sequel in avoiding the phenomenon of collapse to zero in the limit. Finally, in Section~\ref{sec.final}, we complete the proof of Theorem~\ref{th.1} by using the stability theorem for small perturbations of dynamical systems from \cite{GVaz, GVazBook}, which we briefly recall in a more abstract framework in the Appendix. This approach is by now rather standard; nevertheless, we present it in some details, as the presence of the gradient absorption term leads to some further technical complications.

\section{Optimal $L^{\infty}$-bounds}\label{sec.sd}

Let $u_0$ be an initial condition satisfying \eqref{initdata1} and denote the corresponding solution to the Cauchy problem \eqref{eq1} by $u$. We recall that the existence and uniqueness of a non-negative (viscosity) solution $u\in BC([0,\infty)\times\real^N)$ to \eqref{eq1} which is also a weak solution follows from \cite[Theorem~1.1]{BtL08}, and
moreover it satisfies
\begin{equation*}
0\leq u(t,x)\leq\|u_0\|_{\infty}, \quad \|\nabla
u(t)\|_{\infty}\leq\|\nabla u_0\|_{\infty},
\end{equation*}
for any $t\geq 0$. In addition, it follows from \cite[Theorem~1.1 \& Theorem~1.2]{ATU04} and \cite[Theorem~1.6 \&
Corollary~1.7]{BtL08} that $u(t)$ is compactly supported for any
$t>0$ so that the maximal radius of the (compact) support of $u(t)$ and $u_0$ defined by
$$
\varrho(t):=\inf\{R>0: u(t,x)=0 \ {\rm for} \ |x|>R\}, \quad
\varrho_0:=\varrho(0),
$$
are finite for each $t\geq 0$. Furthermore, the following estimates for the support and the $L^1$-norm are established in the above mentioned references: there is $C_1>0$ such that
\begin{equation}\label{supp.est1}
\varrho(t)\leq C_1(1+t)^{\eta} \quad {\rm for \ any} \ t\geq0
\end{equation}
and
\begin{equation}\label{l1.est1}
\|u(t)\|_1\leq C_1\log(1+t)^{-1/(q_*-1)} \quad {\rm for \ any} \
t\geq1.
\end{equation}
However, as we shall see below, the estimates \eqref{supp.est1} and
\eqref{l1.est1} are far from being optimal and can be strongly
improved. We finally recall that, due to \cite[Lemma~4.1]{ATU04}, the solution $u$ to \eqref{eq1} cannot vanish identically in finite time, that is,
\begin{equation*}
u(t) \not\equiv 0 \;\;\text{ for }\;\; t\ge 0\ . 
\end{equation*}

\begin{proposition}\label{prop.improv}
There exists a positive constant $C_2$, depending only on $p$, $q$,
$N$ and $u_0$, such that for any $t>0$,
\begin{equation}\label{supp.est2}
\varrho(t)\leq C_2(1+t)^{\eta}(\log(1+t))^{-\eta(p-2)(N+1)},
\end{equation}
and
\begin{equation}\label{l1.est2}
\|u(t)\|_1\leq C_2(\log(1+t))^{-(N+1)}.
\end{equation}
\end{proposition}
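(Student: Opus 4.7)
My plan is to establish a closed system of two coupled estimates on $M(t) := \|u(t)\|_1$ and on $\varrho(t)$, and then bootstrap. First, integrating \eqref{eq1} over $\real^N$, the divergence term vanishes by compact support of $u(t,\cdot)$, yielding the mass-dissipation identity $M'(t) = -\int_{\real^N} |\nabla u|^{q_*}\,dx$. Combining Poincar\'e's inequality $\|u(t)\|_{q_*} \leq C\varrho(t) \|\nabla u(t)\|_{q_*}$ with the H\"older inequality $\|u(t)\|_1 \leq C \varrho(t)^{N(1-1/q_*)} \|u(t)\|_{q_*}$ (both valid because $\mathrm{supp}\, u(t,\cdot)\subset B(0,\varrho(t))$), and using the algebraic identity $q_*(N+1) - N = p(N+1) - 2N = 1/\eta$, I obtain
$$M'(t) \leq -c\, \frac{M(t)^{q_*}}{\varrho(t)^{1/\eta}}. \qquad (\star)$$

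For the complementary estimate on $\varrho$, I would exploit that $u$ is a subsolution of the pure $p$-Laplacian equation (because $-|\nabla u|^{q_*} \leq 0$), so by the comparison principle $u(t,\cdot) \leq v(t-s,\cdot)$ whenever $v$ solves $\partial_t v = \Delta_p v$ with $v(0,\cdot) = u(s,\cdot)$. Dominating $u(s,\cdot)$ above by a suitable Barenblatt profile of mass $\sim M(s)$ and invoking the classical finite-speed-of-propagation for the $p$-Laplacian yields $\varrho(t) \leq \varrho(s) + C M(s)^{(p-2)\eta}(t-s)^\eta$ for all $0 \leq s \leq t$. Choosing $s = t/2$ and iterating dyadically to absorb $\varrho(t/2)$ gives the self-consistent support bound
$$\varrho(t) \leq C \bigl(M(t/2)^{(p-2)\eta} + 1\bigr)\, t^\eta. \qquad (\star\star)$$

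The final step is a bootstrap. With $\phi(t):=\log(1+t)$, the preliminary estimate \eqref{l1.est1} gives $M(t) \leq C\phi(t)^{-\alpha_0}$ with $\alpha_0 = 1/(q_*-1)$. Inductively, if $M(t) \leq C_n \phi(t)^{-\alpha_n}$ for $t$ large, then $(\star\star)$ (using $\phi(t/2) \sim \phi(t)$) yields $\varrho(t) \leq Ct^\eta \phi(t)^{-(p-2)\eta\alpha_n}$, and substituting into $(\star)$ and integrating in time gives $M(t)\leq C_{n+1}\phi(t)^{-\alpha_{n+1}}$ with the affine recurrence $\alpha_{n+1} = ((p-2)\alpha_n + 1)/(q_*-1)$. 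Its contraction factor $(p-2)/(q_*-1)$ is strictly less than $1$ because $q_* > p-1$, so $\alpha_n$ converges monotonically to the unique fixed point $\alpha_\infty = 1/(q_*-p+1) = N+1$, using $q_*-p+1 = 1/(N+1)$; a parallel analysis shows that the prefactors $C_n$ stabilise at a finite $C_\infty$. Passing to the limit yields \eqref{l1.est2}, which inserted back into $(\star\star)$ gives \eqref{supp.est2}. The main obstacle I expect is Step 3: producing the sharp finite-speed-of-propagation estimate with the correct $L^1$-dependence via the right Barenblatt comparison, which requires matching both the support and the $L^\infty$-size of $u(s,\cdot)$; a secondary subtlety is carefully tracking the prefactors through the bootstrap so as to recover the exact exponent $N+1$ rather than $N+1-\varepsilon$ for every $\varepsilon > 0$.
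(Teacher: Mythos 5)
Your overall architecture matches the paper's: both proofs rest on exactly the two coupled estimates you identify, namely the mass--dissipation/Poincar\'e inequality $(\star)$ (your derivation via H\"older plus Poincar\'e on $B(0,\varrho(t))$ and the identity $q_*(N+1)-N=1/\eta$ is precisely the paper's Lemma~\ref{lem.ineq} and \eqref{interm5}), and a finite-propagation estimate of the form $\varrho(t)\le\varrho(s)+C\|u(s)\|_1^{(p-2)\eta}(t-s)^{\eta}$, which is the paper's \eqref{interm3}. Where you close the loop by an infinite bootstrap on the exponent of the logarithm, with the affine recurrence $\alpha_{n+1}=((p-2)\alpha_n+1)/(q_*-1)$ contracting to $N+1$, the paper closes it in one pass: it introduces $\Sigma(T)=\sup_{[1,T]}t^{-\eta}(\log t)^{\eta(p-2)(N+1)}\sigma(t)$, runs the two estimates once to get $\Sigma(T)\le\delta\,\Sigma(T)+C\,\Sigma(T)^{(p-2)/(q_*-1)}+C$ with $\delta<1$, and absorbs by Young's inequality using the same inequality $(p-2)/(q_*-1)<1$ that drives your contraction. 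Your bootstrap is a legitimate alternative and, since $\alpha_n\uparrow N+1$ with uniformly bounded prefactors (the recursion $\log C_{n+1}=\log K+\theta\log C_n$, $\theta<1$, stabilises), it does recover the exact exponent $N+1$; also note that $(\star\star)$ as written, with the additive $1$ multiplying $t^{\eta}$, destroys the logarithmic gain --- what your iteration actually produces and uses is the conditional statement that $M(s)\lesssim\phi(s)^{-\alpha}$ for all $s$ implies $\varrho(t)\lesssim t^{\eta}\phi(t)^{-(p-2)\eta\alpha}$.

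The genuine gap is the one you flag yourself: the derivation of the propagation estimate $\varrho(t)\le\varrho(s)+CM(s)^{(p-2)\eta}(t-s)^{\eta}$ by dominating $u(s,\cdot)$ from above by a Barenblatt profile ``of mass $\sim M(s)$''. The smallest Barenblatt source solution lying above $u(s,\cdot)$ must exceed $\|u(s)\|_\infty$ on the whole support $B(0,\varrho(s))$, so its mass is controlled by $\|u(s)\|_\infty\varrho(s)^{N}$ rather than by $\|u(s)\|_1$; reducing the former to the latter via the smoothing estimate $\|u(s)\|_\infty\lesssim M(s/2)^{p\eta}s^{-N\eta}$ requires precisely the bound $\varrho(s)\lesssim M^{(p-2)\eta}s^{\eta}$ that you are trying to establish, so the argument as proposed is circular. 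The paper avoids this entirely: it first replaces $u$ by a radially symmetric, radially non-increasing majorant $U$, invokes the gradient estimate $|\nabla U^{(p-2)/(p-1)}(t)|\lesssim\|U(s)\|_\infty^{(p-2)/p(p-1)}(t-s)^{-1/p}$ from \cite{BtL08} combined with the $L^1$--$L^\infty$ smoothing bound, and then computes the mass flux of $U$ across a moving sphere $\{|x|=y(t)\}$ with $y'(t)$ chosen to dominate $\frac{p-1}{p-2}|\nabla U^{(p-2)/(p-1)}|^{p-1}$; the vanishing of the exterior mass then yields \eqref{interm3} with the correct $\|U(t_0)\|_1^{(p-2)\eta}$ dependence and no a priori knowledge of $\varrho$. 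Without this (or an equivalent non-circular derivation of $(\star\star)$), your bootstrap has nothing to iterate on, so this step needs to be supplied rather than assumed.
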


Notice that this is a real improvement in the second estimate
\eqref{l1.est2} with respect to \eqref{l1.est1}, since
$$
\frac{1}{q_*-1}=\frac{N+1}{p(N+1)-(2N+1)} < \frac{N+1}{2(N+1)-(2N+1)}=N+1,
$$
for $p>2$.
\begin{proof}
Since $u_0$ is non-negative, continuous and compactly supported in
$\real^N$, there exists a non-negative, continuous and radially
symmetric, radially non-increasing function $U_0$ with compact
support, such that $0\leq u_0(x)\leq U_0(x)$ for any $x\in\real^N$.
Let $U$ be the corresponding solution to \eqref{eq1} with initial
data $U_0$; it follows from \cite[Theorem~1.2]{BtL08} that the
function $x\mapsto U(t,x)$ is also radially symmetric, radially
non-increasing and compactly supported, for any $t>0$, and we deduce
from the comparison principle that $0\leq u(t,x)\leq U(t,x)$, for
any $(t,x)\in(0,\infty)\times\real^N$. Let, for $t\geq0$,
$$
\sigma(t):=\sup\limits_{x\in\real^N}\{|x|: U(t,x)>0\}
$$
be the radius of the support of $U$. We then have
$\varrho(t)\leq\sigma(t)$. Furthermore, we infer from \cite[Theorem~1.2 \& Proposition~1.4]{BtL08} that
\begin{equation}\label{interm1}
|\nabla U^{(p-2)/(p-1)}(t,x)|\leq
C_3\|U(s)\|_{\infty}^{(p-2)/p(p-1)}(t-s)^{-1/p}, \quad {\rm for} \
0\leq s<t
\end{equation}
and
\begin{equation}\label{interm2}
\|U(t)\|_{\infty}\leq C_3\|U(s)\|_1^{p\eta}(t-s)^{-N\eta}, \quad
{\rm for} \ 0\leq s<t.
\end{equation}
It next follows from \eqref{eq1} that, for any non-negative function
$y\in C^1([0,\infty))$, we have
\begin{equation*}
\begin{split}
\frac{d}{dt}\int_{\{|x|\geq y(t)\}}U(t,x)\,dx&=\int_{\{|x|\geq
y(t)\}}\partial_tU(t,x)\,dx-y'(t)\int_{\{|x|=y(t)\}}U(t,x)\,dx\\
&\leq\int_{\{|x|\geq y(t)\}}{\rm div}(|\nabla U|^{p-2}\nabla
U)(t,x)\,dx-y'(t)\int_{\{|x|=y(t)\}}U(t,x)\,dx\\
&\leq-\int_{\{|x|=y(t)\}}|\nabla U(t,x)|^{p-2}\nabla
U(t,x)\cdot\frac{x}{|x|}\,dx\\&-y'(t)\int_{\{|x|=y(t)\}}U(t,x)\,dx\\
&\leq\int_{\{|x|=y(t)\}}\left[\frac{p-1}{p-2}|\nabla U^{(p-2)/(p-1)}(t,x)|^{p-1}-y'(t)\right] U(t,x)\,dx.
\end{split}
\end{equation*}
Fix now $t_0\geq0$. For $t>t_0$ we deduce from the estimates
\eqref{interm1} (with $s=(t+t_0)/2$) and \eqref{interm2} (with
$((t+t_0)/2,t_0)$ instead of $(t,s)$) that
\begin{equation*}
\begin{split}
\frac{p-1}{p-2}|\nabla& U^{(p-2)/(p-1)} (t,x)|^{p-1}\leq2^{(p-1)/p}\frac{p-1}{p-2}C_3^{p-1}\left\|U\left(\frac{t+t_0}{2}\right)\right\|_{\infty}^{p-2)/p} (t-t_0)^{-(p-1)/p}\\
&\leq(2C_3^p)^{(p-1)/p} \frac{p-1}{p-2} \left[C_3\|U(t_0)\|_1^{p\eta}\left(\frac{t-t_0}{2}\right)^{-N\eta}\right]^{(p-2)/p}(t-t_0)^{-(p-1)/p}\\
&\leq\eta C_4\|U(t_0)\|_1^{(p-2)\eta}(t-t_0)^{\eta-1}.
\end{split}
\end{equation*}
Choosing
$$
y(t):=\sigma(t_0)+C_4\|U(t_0)\|_1^{(p-2)\eta}(t-t_0)^{\eta},
$$
the above inequality reads
$$
\frac{p-1}{p-2}|\nabla U^{(p-2)/(p-1)} (t,x)|^{p-1}\leq y'(t),
$$
for any $t\geq t_0$, from which we deduce that
$$
\frac{d}{dt}\int_{\{|x|\geq y(t)\}}U(t,x)\,dx\leq0 \quad {\rm for}
\ t\geq t_0.
$$
Since $y(t_0)=\sigma(t_0)$, we end up with
$$
\int_{\{|x|\geq y(t)\}} U(t,x)\,dx\leq\int_{\{|x|\geq\sigma(t_0)\}}U(t_0,x)\,dx=0,
$$
for any $t>t_0$. Owing to the non-negativity of $U$, this is only possible if $U(t,x)\equiv0$ for $|x|\geq y(t)$, which means
\begin{equation}\label{interm3}
\sigma(t)\leq
y(t)=\sigma(t_0)+C_4\|U(t_0)\|_1^{(p-2)\eta}(t-t_0)^{\eta},
\end{equation}
for $t>t_0$. In order to proceed further, we need the following Poincar\'e inequality:

\begin{lemma}\label{lem.ineq}
Given $\mu\in[1,\infty)$ and $R>0$ there exists a constant $K$ depending only in $N$ and $\mu$ such that
\begin{equation*}
\|w\|_{L^1(B(0,R))}^{\mu}\leq KR^{\mu(N+1)-N}\|\nabla
w\|_{L^{\mu}(B(0,R))}^{\mu} \;\;\text{ for all }\;\; w\in
W_0^{1,\mu}(B(0,R))\ .
\end{equation*}
\end{lemma}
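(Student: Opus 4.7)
The plan is to reduce the inequality on $B(0,R)$ to the corresponding one on the unit ball by a dilation, and then combine H\"older's inequality with the classical Poincar\'e inequality on the unit ball.

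First, I would introduce the rescaled function $v(y):=w(Ry)$ for $y\in B(0,1)$, which lies in $W_0^{1,\mu}(B(0,1))$ whenever $w\in W_0^{1,\mu}(B(0,R))$. A direct change of variables gives $\|w\|_{L^1(B(0,R))} = R^N \|v\|_{L^1(B(0,1))}$ and $\|\nabla w\|_{L^\mu(B(0,R))}^\mu = R^{N-\mu} \|\nabla v\|_{L^\mu(B(0,1))}^\mu$. Plugging these identities into the claimed inequality, one checks that both sides scale as $R^{N\mu}$ (indeed $R^{\mu(N+1)-N}\cdot R^{N-\mu} = R^{\mu N}$), so the statement for arbitrary $R>0$ follows from the single case $R=1$, namely
$$\|v\|_{L^1(B(0,1))}^\mu \leq K\|\nabla v\|_{L^\mu(B(0,1))}^\mu \quad\text{for all } v\in W_0^{1,\mu}(B(0,1)).$$

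Second, for the unit ball I would combine two textbook inequalities. H\"older's inequality (with $|B(0,1)|$ the volume of the unit ball of $\real^N$) gives
$$\|v\|_{L^1(B(0,1))} \leq |B(0,1)|^{(\mu-1)/\mu}\,\|v\|_{L^\mu(B(0,1))},$$
while the classical Poincar\'e inequality for functions with zero trace on the bounded smooth domain $B(0,1)$ provides a constant $C_P=C_P(N,\mu)$ such that $\|v\|_{L^\mu(B(0,1))} \leq C_P \|\nabla v\|_{L^\mu(B(0,1))}$. Chaining these two bounds and raising to the $\mu$-th power yields the required inequality with $K := |B(0,1)|^{\mu-1} C_P^\mu$, which depends only on $N$ and $\mu$, as claimed.

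I do not foresee any real obstacle here; the content is pure dimensional analysis together with well-known functional inequalities. The only thing worth double-checking is that the exponent $\mu(N+1)-N$ appearing in the statement is precisely the unique one that makes the estimate scale-invariant, which the computation above confirms and explains why this is the correct form to feed into the support analysis of Proposition~\ref{prop.improv}.
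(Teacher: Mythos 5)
Your proof is correct and follows essentially the same route as the paper, which also combines H\"older's inequality with the Poincar\'e inequality and simply quotes that the Poincar\'e constant on $B(0,R)$ is of order $O(R)$ — the fact your dilation argument makes explicit. No issues.
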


\begin{proof}[Proof of Lemma~\ref{lem.ineq}]
This follows by combining H\"older and Poincar\'e inequalities. More
precisely, setting $\mu' := \mu/(\mu-1)$, one has
\begin{equation*}
\begin{split}
\|w\|_{L^1(B(0,R))}^{\mu}&\leq\|w\|_{L^{\mu}(B(0,R))}^{\mu} | B(0,R)|^{\mu/\mu'}\\
&=K \|w\|_{L^{\mu}(B(0,R))}^{\mu} R^{N(\mu-1)}\\
&\leq KR^{\mu}\|\nabla
w\|_{L^{\mu}(B(0,R))}^{\mu}R^{N\mu-N}=KR^{\mu(N+1)-N}\|\nabla
w\|_{L^{\mu}(B(0,R))}^{\mu},
\end{split}
\end{equation*}
where we have used that the Poincar\'e constant is of order $O(R)$.
\end{proof}

In order to continue the proof of Proposition~\ref{prop.improv}, we first note that integrating \eqref{eq1} over $\real^N$ gives
$$
\frac{d}{dt}\|U(t)\|_1+\|\nabla U\|_{q_*}^{q_*}=0\ .
$$
Since $U(t)$ is supported in $B(0,\sigma(t))$ for each $t\ge 0$, it belongs to $W_0^{1,q_*}(B(0,\sigma(t)))$ and we apply Lemma~\ref{lem.ineq} to obtain
\begin{equation}\label{interm5}
\frac{d}{dt}\|U(t)\|_1+\frac{1}{K}\frac{\|U(t)\|_1^{q_*}}{\sigma(t)^{1/\eta}}\le 0
\end{equation}
for any $t\ge 0$. Now, fix $T\geq1$ and introduce the notation
$$
\Sigma(T):=\sup\limits_{t\in[1,T]}\left\{t^{-\eta}(\log
t)^A\sigma(t)\right\}, \quad A:=\eta(p-2)(N+1).
$$
We infer from \eqref{interm5} that, for any
$t\in[1,T]$,
$$
\frac{d}{dt}\|U(t)\|_1+\frac{(\log
t)^{(p-2)(N+1)}}{Kt}\frac{\|U(t)\|_1^{q_*}}{(t^{-\eta}(\log
t)^A\sigma(t))^{1/\eta}}\leq0,
$$
whence
$$
\frac{d}{dt}\|U(t)\|_1+\frac{(\log
t)^{(p-2)(N+1)}}{Kt(\Sigma(T))^{1/\eta}}\|U(t)\|_{1}^{q_*}\leq0\ , \quad t\in[1,T]\ .
$$
Integrating the above inequality over $(1,t)$, $t\in (1,T)$, we find
\begin{equation}\label{interm6}
\|U(t)\|_1\leq (K(N+1))^{1/(q_*-1)} \Sigma(T)^{1/\eta(q_*-1)}(\log t)^{-(N+1)}, \quad t\in (1,T].
\end{equation}

Next, let $m\ge 1$ be  an integer to be determined later and consider $t\in (1,T]$.

\noindent $\bullet$ Either $t\leq2^m$ and it follows from
\eqref{interm3} with $t_0=1$ that
$$
t^{-\eta}(\log
t)^{A}\sigma(t)\leq(\log2^m)^{A}\left[\sigma(1)+C_4\|U(1)\|_1^{(p-2)\eta}2^{m\eta}\right]\leq C(m)\ .
$$

\noindent $\bullet$ Or $2^m\leq t\leq T$ (if $2^m\le T$), and we infer from \eqref{interm3} (with $t_0=t/2$) and \eqref{interm6} that
\begin{equation*}
\begin{split}
t^{-\eta}(\log t)^A\sigma(t)&\leq t^{-\eta}(\log
t)^A\sigma\left(\frac{t}{2}\right)+C_4t^{-\eta}(\log
t)^A\left\|U\left(\frac{t}{2}\right)\right\|_1^{(p-2)\eta}\left(\frac{t}{2}\right)^{\eta}\\
&\leq2^{-\eta}\left(\frac{t}{2}\right)^{-\eta}\left(\frac{\log
t}{\log(t/2)}\right)^A\log\left(\frac{t}{2}\right)^A\sigma\left(\frac{t}{2}\right)\\
&+C_42^{-\eta}(\log
t)^A\left[\Sigma(T)^{1/\eta(q_*-1)}\left(\log\left(\frac{t}{2}\right)\right)^{-(N+1)}\right]^{(p-2)\eta}.
\end{split}
\end{equation*}
Since $t\geq2^m$, we obtain that
$$
\log t\leq\frac{m}{m-1}\log\left(\frac{t}{2}\right),
$$
whence, plugging this estimate in the previous inequality, we get
$$
t^{-\eta}(\log
t)^A\sigma(t)\leq\left(\frac{m}{m-1}\right)^A2^{-\eta}\left[\Sigma(T)+C_4\Sigma(T)^{(p-2)/(q_*-1)}\right].
$$
Combining the previous two estimates and taking the supremum over $t\in (1,T]$, we obtain
$$
\Sigma(T)\leq\left(\frac{m}{m-1}\right)^A2^{-\eta}\Sigma(T)+C_5\Sigma(T)^{(p-2)/(q_*-1)}+C(m).
$$
We now fix $m$ large enough such that
$$
\delta_m := \left(\frac{m}{m-1}\right)^A2^{-\eta}<1\ .
$$
Taking into account that $p-2<q_*-1$, we deduce from Young's inequality that
$$
\Sigma(T)\leq \delta_m \Sigma(T) + \frac{1-\delta_m}{2}\Sigma(T) +C(m)\ ,
$$
which readily implies that $\Sigma(T)\leq C(m)$ for each $T\geq1$, the constant $C(m)$ being independent of $T$. We have thus proved that
$$
\sigma(t)\leq Ct^{\eta}(\log t)^{-\eta(p-2)(N+1)}, \quad t\geq 1,
$$
while the uniform bound for $\Sigma(T)$ together with
\eqref{interm6} imply
$$
\|U(t)\|_1\leq C(\log t)^{-(N+1)}.
$$
Recalling that $u(t,x)\leq U(t,x)$ for any
$(t,x)\in(0,\infty)\times\real^N$ and that
$\varrho(t)\leq\sigma(t)$, we obtain the expected estimates \eqref{supp.est2} and \eqref{l1.est2}.
\end{proof}

\begin{corollary}\label{cor.est}
There exists a constant $C_5>0$ depending only on $p$, $q$, $N$ and
$u_0$, such that for any $t>0$, we have
\begin{equation}\label{linf.est}
\|u(t)\|_{\infty}\leq C_5(1+t)^{-N\eta}(\log(1+t))^{-p\eta(N+1)}
\end{equation}
and
\begin{equation}\label{linf.grad.est}
\|\nabla u(t)\|_{\infty}\leq
C_5(1+t)^{-(N+1)\eta}(\log(1+t))^{-2\eta(N+1)}.
\end{equation}
\end{corollary}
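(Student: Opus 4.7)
The plan is to deduce both estimates by combining the sharp $L^1$-decay \eqref{l1.est2} just established in Proposition~\ref{prop.improv} with the two smoothing-effect estimates \eqref{interm1} and \eqref{interm2} from \cite{BtL08}, both of which are valid for any non-negative solution of \eqref{eq1}.

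For the $L^\infty$-bound \eqref{linf.est}, I would apply \eqref{interm2} with the choice $s=t/2$ to get, for $t\geq 2$,
\[
\|u(t)\|_\infty \leq C_3\, \|u(t/2)\|_1^{p\eta}\, (t/2)^{-N\eta},
\]
and then insert the sharp $L^1$-estimate $\|u(t/2)\|_1 \leq C_2\, (\log(1+t/2))^{-(N+1)}$ coming from \eqref{l1.est2}. After trading $t/2$ for $1+t$ and absorbing constants, this yields \eqref{linf.est} on $[2,\infty)$. The range $t\in[0,2]$ is covered trivially by $\|u(t)\|_\infty \leq \|u_0\|_\infty$ upon enlarging $C_5$ if necessary.

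For the gradient estimate \eqref{linf.grad.est}, I would exploit the pointwise chain-rule identity
\[
\nabla u = \frac{p-1}{p-2}\, u^{1/(p-1)}\, \nabla u^{(p-2)/(p-1)} \quad \text{on } \{u>0\},
\]
which gives $\|\nabla u(t)\|_\infty \leq \tfrac{p-1}{p-2}\,\|u(t)\|_\infty^{1/(p-1)}\,\|\nabla u^{(p-2)/(p-1)}(t)\|_\infty$. I would then apply \eqref{interm1} with $s=t/2$, and for the factor $\|u(t/2)\|_\infty$ that appears on its right-hand side I would substitute the $L^\infty$-bound already derived in the previous step. Multiplying the two estimates produces \eqref{linf.grad.est}.

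I expect no genuine difficulty here; the only real concern is the bookkeeping of exponents. A short computation using the identity $1/\eta = p(N+1)-2N$ confirms that the algebraic powers of $1+t$ sum to exactly $-(N+1)\eta$ and the powers of $\log(1+t)$ to exactly $-2\eta(N+1)$, so \eqref{linf.grad.est} is reached on the nose.
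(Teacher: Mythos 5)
Your proposal is correct and follows essentially the same route as the paper: both estimates are obtained by feeding the sharpened $L^1$-decay \eqref{l1.est2} into the smoothing estimates of \cite[Proposition~1.4]{BtL08} evaluated at $s=t/2$. The only cosmetic difference is in the gradient bound, where the paper quotes the inequality $\|\nabla u(t)\|_{\infty}\leq C\|u(s)\|_1^{2\eta}(t-s)^{-(N+1)\eta}$ directly from \cite{BtL08}, while you reassemble it from \eqref{interm1} and the chain rule $\nabla u=\tfrac{p-1}{p-2}u^{1/(p-1)}\nabla u^{(p-2)/(p-1)}$; your exponent bookkeeping ($\tfrac{1}{p-1}+\tfrac{p-2}{p(p-1)}=\tfrac{2}{p}$ together with $\eta(p(N+1)-2N)=1$) checks out.
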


\begin{proof}
We combine the estimates in \cite[Proposition~1.4]{BtL08} with the
previous estimates of Proposition~\ref{prop.improv}. We thereby obtain
\begin{equation*}
\begin{split}
\|u(t)\|_{\infty}&\leq
C\left\|u\left(\frac{t}{2}\right)\right\|_1^{p\eta}t^{-N\eta}2^{N\eta}\\
&\leq Ct^{-N\eta}\log\left(1+\frac{t}{2}\right)^{-(N+1)p\eta}\leq
Ct^{-N\eta}\left(\frac{1}{2}\log(1+t)\right)^{-(N+1)p\eta}\\
&\leq C(1+t)^{-N\eta}(\log(1+t))^{-(N+1)p\eta}
\end{split}
\end{equation*}
and
\begin{equation*}
\begin{split}
\|\nabla u(t)\|_{\infty}&\leq
C\left\|u\left(\frac{t}{2}\right)\right\|_1^{2\eta}t^{-\eta(N+1)}\\
&\leq Ct^{-\eta(N+1)}(\log(1+t))^{-2\eta(N+1)},
\end{split}
\end{equation*}
which completes the proof.
\end{proof}

As we shall see in the next section, these estimates are optimal and are building blocks in identifying the large time behavior of compactly supported solutions to \eqref{eq1} and thus proving Theorem~\ref{th.1}.

\section{Scaling variables}\label{sec.scaling}

Let $u_0$ be an initial condition satisfying \eqref{initdata1} and
denote the corresponding solution to the Cauchy problem
\eqref{eq1}-\eqref{eq1in} by $u$. According to the
estimates derived in Section~\ref{sec.sd}, we introduce the
following new variables $(s,y)$ and function $w$:
\begin{equation}\label{resc.1}
s=\log(e+t), \quad y=x(e+t)^{-\eta}\log(e+t)^{(p-2)(N+1)\eta},
\end{equation}
and
\begin{equation}\label{resc.2}
u(t,x)=(e+t)^{-N\eta}\log(e+t)^{-p\eta(N+1)} w(s,y).
\end{equation}
By \eqref{eq1} the rescaled function $w$ solves
\begin{subequations}
\begin{eqnarray}
\partial_s w -\mathcal{L}w & = & 0  \quad {\rm in} \
(1,\infty)\times\real^N\ , \label{eq.resc} \\
w(1) & = & u_0 \quad  {\rm in} \ \real^N\ , \label{eq.rescin}
\end{eqnarray}
where
\begin{equation}\label{eq.rescL}
\begin{split}
\mathcal{L}z & := \eta y\cdot\nabla z + \eta Nz + \Delta_p z \\
& \qquad -\frac{1}{s}\left[|\nabla z|^{q_*}-p\eta(N+1)z+(p-2)\eta(N+1)y\cdot\nabla z\right]\ .
\end{split}
\end{equation}
\end{subequations}
For further use, we introduce the autonomous counterpart of \eqref{eq.resc} which is
\begin{subequations}
\begin{equation}
\partial_s v - L v = 0  \quad {\rm in} \ (1,\infty)\times\real^N\ , \label{eq.auton}
\end{equation}
with
\begin{equation}\label{eq.autonL}
Lz:=\eta y\cdot\nabla z+\eta Nz+\Delta_p z.
\end{equation}
\end{subequations}
The boundedness of $w$ readily follows from the previous section.

\begin{lemma}\label{lem.above}
There is a positive constant $C_6$ depending only on $p$, $q$, $N$, and $u_0$ such that
\begin{equation*}
\|w(s)\|_{1}+\|w(s)\|_{\infty}+\|\nabla w(s)\|_{\infty}\leq C_6,
\end{equation*}
for any $s>1$. Moreover, the support of $w(s)$ is localized: there exists $R_0>0$ such that ${\rm supp}(w(s))\subseteq
B(0,R_0)$ for any $s\in(1,\infty)$.
\end{lemma}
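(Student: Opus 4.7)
The plan is to transport each of the sharp estimates obtained in Section~\ref{sec.sd} through the scaling \eqref{resc.1}-\eqref{resc.2}. By construction, the space and time scales match the decay rates exactly, so the claimed uniform bounds on $w$, $\nabla w$, $\|w(s)\|_1$ and the support should reduce to verifying that the exponents of $t$ and $\log t$ cancel on both sides.

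Specifically, from \eqref{resc.2} one reads off
\[
\|w(s)\|_\infty = (e+t)^{N\eta}\log(e+t)^{p\eta(N+1)}\|u(t)\|_\infty,
\]
so \eqref{linf.est} in Corollary~\ref{cor.est} yields the $L^\infty$ bound. Applying the chain rule to \eqref{resc.2} together with the spatial rescaling $y=x(e+t)^{-\eta}\log(e+t)^{(p-2)(N+1)\eta}$ gives
\[
\nabla_x u(t,x) = (e+t)^{-(N+1)\eta}\log(e+t)^{-2\eta(N+1)}\nabla_y w(s,y),
\]
and \eqref{linf.grad.est} controls $\|\nabla w(s)\|_\infty$. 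For the $L^1$ norm, the Jacobian of the change of variables contributes a factor $(e+t)^{-N\eta}\log(e+t)^{N(p-2)(N+1)\eta}$, so that
\[
\|w(s)\|_1 = \log(e+t)^{p\eta(N+1)+N(p-2)(N+1)\eta}\|u(t)\|_1 = \log(e+t)^{N+1}\|u(t)\|_1,
\]
where I use the identity $\eta(p+N(p-2))=1$ from \eqref{eta}. The bound \eqref{l1.est2} then delivers $\|w(s)\|_1\leq C$. Finally, localization of the support is a direct consequence of \eqref{supp.est2}: $w(s,y)=0$ whenever
\[
|y|>\varrho(t)(e+t)^{-\eta}\log(e+t)^{(p-2)(N+1)\eta},
\]
and the right-hand side is bounded by a constant $R_0$ because the polynomial and logarithmic factors in \eqref{supp.est2} cancel those coming from the space scaling, using that $(e+t)$ and $(1+t)$ (resp. $\log(e+t)$ and $\log(1+t)$) are of the same order.

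There is no real obstacle here: this lemma expresses precisely the fact that the scaling has been tuned to produce a uniformly bounded and compactly supported rescaled profile. The only care required concerns the regime $s$ close to $1$, that is $t$ close to $0$, where $\log(1+t)$ degenerates and the estimates \eqref{l1.est2}, \eqref{linf.est}, \eqref{linf.grad.est}, \eqref{supp.est2} are vacuous; there one falls back on the trivial $L^\infty$, $L^1$, Lipschitz, and finite-propagation bounds for $u$ inherited from the initial datum and recalled at the beginning of Section~\ref{sec.sd}, which are enough to control the associated $w(s)$ on any bounded $s$-interval $[1,s_0]$.
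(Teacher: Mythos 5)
Your proposal is correct and follows essentially the same route as the paper: all four bounds are obtained by transporting \eqref{l1.est2}, \eqref{linf.est}, \eqref{linf.grad.est}, and \eqref{supp.est2} through the change of variables \eqref{resc.1}--\eqref{resc.2}, with the exponents cancelling via $\eta(p+N(p-2))=1$ exactly as you compute. Your extra remark about the regime $t$ close to $0$, where one falls back on the monotone $L^1$, $L^\infty$, and Lipschitz bounds inherited from $u_0$, is a legitimate point of care that the paper's proof passes over silently.
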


\begin{proof}
The bounds for the $W^{1,\infty}$-norm of $w(s)$ are immediate
consequences of estimates \eqref{linf.est} and
\eqref{linf.grad.est}, taking into account the definition of $w$ in
\eqref{resc.2}. The estimate for the $L^1$-norm follows from
\eqref{l1.est2} by a change of variable as below:
\begin{equation*}
\begin{split}
\|w(s)\|_{1}&=(e+t)^{N\eta}\log(e+t)^{p\eta(N+1)}\int_{\real^N} u\left( t,y (e+t)^\eta \log(e+t)^{-(p-2)(N+1)\eta} \right) \,dy\\
&=\log(e+t)^{N+1}\|u(t)\|_1\leq C_2.
\end{split}
\end{equation*}
Finally, the assertion about the localization of the support follows
from estimate \eqref{supp.est2} and the definition of the new
variable $y$ in \eqref{resc.1} (with $R_0=C_2$).
\end{proof}

Lemma~\ref{lem.above} provides a fine upper bound for $w$ but its optimality can only be guaranteed by a lower bound of the same order. In addition, such a lower bound would prevent the possibility of collapsing to the trivial solution in the limit $s\to\infty$. This is a consequence of the following result.

\begin{lemma}\label{lem.below}
There exists $A_{sub}>0$ sufficiently small such that the Barenblatt
profile
\begin{equation*}
B_A(y)=\left(A-B_0|y|^{p/(p-1)}\right)_{+}^{(p-1)/(p-2)}, \quad
B_0=\frac{p-2}{p}\eta^{1/(p-1)},
\end{equation*}
is a subsolution to Eq.~\eqref{eq.resc} for $A\in (0,A_{sub})$.
\end{lemma}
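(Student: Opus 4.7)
The plan is to exploit the fact that $B_A$ is stationary and coincides with the classical Barenblatt profile for the pure $p$-Laplacian evolution, so that $L B_A = 0$ in the appropriate weak sense, where $L$ is the autonomous operator defined in \eqref{eq.autonL}. Writing the non-autonomous operator $\mathcal{L}$ from \eqref{eq.rescL} as $\mathcal{L} = L + s^{-1} R$ with
\[
R z := -|\nabla z|^{q_*} + p\eta(N+1) z - (p-2)\eta(N+1)\, y\cdot\nabla z,
\]
the subsolution inequality $\partial_s B_A - \mathcal{L} B_A \leq 0$ reduces, since $\partial_s B_A = 0$, to showing that $R B_A \geq 0$ pointwise inside the positivity set of $B_A$.

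I would first compute $y\cdot\nabla B_A$ directly from the explicit form of $B_A$, obtaining, on $\{B_A > 0\}$,
\[
y\cdot\nabla B_A = -\frac{p B_0}{p-2}\,|y|^{p/(p-1)}\,\bigl(A-B_0|y|^{p/(p-1)}\bigr)^{1/(p-2)}.
\]
Combining with $-p\eta(N+1) B_A$ and factoring out the common power $(A-B_0|y|^{p/(p-1)})^{1/(p-2)}$, the bracket telescopes thanks to $(A-B_0|y|^{p/(p-1)}) + B_0|y|^{p/(p-1)} = A$, yielding the clean identity
\[
-p\eta(N+1) B_A + (p-2)\eta(N+1)\, y\cdot\nabla B_A = -p\eta(N+1)\, A\,\bigl(A-B_0|y|^{p/(p-1)}\bigr)^{1/(p-2)}.
\]
The key structural observation is that the favourable (negative) contribution to $R B_A$ is proportional to $A$.

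Next, using the explicit formula
\[
|\nabla B_A|^{q_*} = \left(\frac{p B_0}{p-2}\right)^{q_*}\bigl(A - B_0|y|^{p/(p-1)}\bigr)^{q_*/(p-2)}\, |y|^{q_*/(p-1)},
\]
together with the crude bounds $A-B_0|y|^{p/(p-1)} \leq A$ and $|y|^{p/(p-1)} \leq A/B_0$ on $\{B_A>0\}$, I would extract a single factor $(A-B_0|y|^{p/(p-1)})^{1/(p-2)}$ matching the identity above to obtain
\[
|\nabla B_A|^{q_*} \leq C(p,N)\, A^{1+\beta}\,\bigl(A-B_0|y|^{p/(p-1)}\bigr)^{1/(p-2)}, \qquad \beta := \frac{(p-1)(2q_* - p)}{p(p-2)}.
\]
The crucial positivity $\beta > 0$ follows from $2q_* - p = p - 2N/(N+1) > 0$, which holds because $p>2$. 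Combining the two estimates gives
\[
R B_A \geq \bigl[p\eta(N+1) - C(p,N)\, A^{\beta}\bigr]\, A\,\bigl(A-B_0|y|^{p/(p-1)}\bigr)^{1/(p-2)} \geq 0,
\]
as soon as $A$ is smaller than an explicit threshold $A_{sub} := (p\eta(N+1)/C(p,N))^{1/\beta}$, so that $R B_A \geq 0$ throughout the positivity set.

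The main obstacle I anticipate is the regularity of $B_A$ at the free boundary $\partial\{B_A>0\}$: for $p>2$ the profile is only $C^1$ with vanishing gradient there, and not classically twice differentiable, so the subsolution property has to be interpreted in the weak or viscosity sense underlying the comparison principle of \cite{BtL08}. Outside the positivity set $B_A\equiv 0$ is trivially a subsolution of \eqref{eq.resc}, and the $C^1$ matching of $B_A$ and its gradient across the interface, together with the standard theory for degenerate parabolic equations of $p$-Laplacian type, should permit to patch the two pieces into a global weak subsolution, valid for every $A \in (0, A_{sub})$.
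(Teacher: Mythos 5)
Your proposal is correct and follows essentially the same route as the paper: it uses $LB_A=0$, reduces the subsolution inequality to the sign of the perturbation term, exploits the telescoping identity $p\eta(N+1)B_A-(p-2)\eta(N+1)y\cdot\nabla B_A=p\eta(N+1)A\left(A-B_0|y|^{p/(p-1)}\right)^{1/(p-2)}$, and bounds $|\nabla B_A|^{q_*}$ by $CA^{1+\beta}\left(A-B_0|y|^{p/(p-1)}\right)^{1/(p-2)}$ with $\beta>0$ (your $\beta$ coincides with the paper's exponent $\theta$), yielding the same smallness threshold for $A$. The only addition is your (reasonable) remark on the weak/viscosity interpretation at the free boundary, which the paper leaves implicit.
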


\begin{proof}
A simple computation shows that $LB_A=0$ for any $A>0$, where $L$ is the autonomous operator defined in \eqref{eq.autonL}. Moreover,
$$
\nabla B_A(y)\cdot y=-\frac{B_0p}{p-2}\left(A-B_0|y|^{p/(p-1)}\right)_{+}^{1/(p-2)}|y|^{p/(p-1)},
$$
hence
\begin{equation*}
\begin{split}
- s\mathcal{L}B_A&=|\nabla B_A|^{q_*}+\eta(p-2)(N+1)y\cdot\nabla
B_A -\eta(N+1)p B_A\\
&=\left(\frac{B_0p}{p-2}\right)^{q_*}\left(A-B_0|y|^{p/(p-1)}\right)_{+}^{q_*/(p-2)}|y|^{q_*/(p-1)}\\
&-\eta(N+1)pB_0 \left(A-B_0|y|^{p/(p-1)}\right)_{+}^{1/(p-2)}|y|^{p/(p-1)}\\
&-\eta(N+1)p\left(A-B_0|y|^{p/(p-1)}\right)_{+}^{(p-1)/(p-2)}\\
&=\eta^{q_*/(p-1)}\left(A-B_0|y|^{p/(p-1)}\right)_{+}^{q_*/(p-2)}|y|^{q_*/(p-1)} \\
& -\eta(N+1)pA\left(A-B_0|y|^{p/(p-1)}\right)_{+}^{1/(p-2)}\\
&=\eta\left(A-B_0|y|^{p/(p-1)}\right)_{+}^{1/(p-2)}\\
&\times\left[\eta^{(q_*-p+1)/(p-1)}\left(A-B_0|y|^{p/(p-1)}\right)_{+}^{(q_*-1)/(p-2)}|y|^{q_*/(p-1)}-(N+1)pA\right].
\end{split}
\end{equation*}
Either $|y|\ge (A/B_0)^{(p-1)/p}$ and $-s\mathcal{L}B_A(y)=0$. Or $|y|\leq(A/B_0)^{(p-1)/p}$ and, since $A-B_0|y|^{p/(p-1)}\leq A$, we find
\begin{equation*}
\begin{split}
& -s\mathcal{L}B_A \\
& \quad \leq\eta \left(A-B_0|y|^{p/(p-1)}\right)_{+}^{1/(p-2)} \left[\eta^{(q_*-p+1)/(p-1)}A^{(q_*-1)/(p-2)}\left(\frac{A}{B_0}\right)^{q/p}-(N+1)pA\right]\\
& \quad \leq\eta A \left(A-B_0|y|^{p/(p-1)}\right)_{+}^{1/(p-2)} \left[\eta^{(q_*-p+1)/(p-1)}B_0^{-q_*/p} A^{\theta}-(N+1)p\right]\ ,
\end{split}
\end{equation*}
where
$$
\theta:= \frac{q_*-1}{p-2}+\frac{q}{p}-1=\frac{(N+1)(p-1)((N+1)p-2N)}{p(p-2)}>0
$$
for $p>2$, taking into account that $q_* = p - N/(N+1)$. Consequently, there exists
$$
A_{sub}=\left[(N+1)pB_0^{q/p}\eta^{-(q-1)/(p-1)}\right]^{1/\theta}>0,
$$
such that $-s\mathcal{L}B_A\leq 0$ in $(1,\infty)\times\real^N$ for
$A\in(0,A_{sub})$, ending the proof.
\end{proof}

A first consequence of Lemma~\ref{lem.below} is the optimality of the temporal decay estimates established in Lemma~\ref{lem.above}.

\begin{proposition}\label{pr.lowerbound}
There are $A_0\in (0,A_{sub})$ and $t_0>0$ such that
\begin{equation}
\| u(t)\|_\infty \ge A_0 (e+t)^{-N\eta} \log(e+t)^{-p\eta(N+1)}\ , \qquad t>0\ , \label{phl0}
\end{equation}
and
\begin{equation}
u(t_0,x) \ge B_{A_0}(x)\ , \qquad x\in\real^N\ . \label{phl00}
\end{equation}
\end{proposition}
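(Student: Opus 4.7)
The proof of Proposition~\ref{pr.lowerbound} rests on producing an explicit subsolution of \eqref{eq1} from Lemma~\ref{lem.below} and applying the comparison principle.

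\textbf{Subsolution in the original variables.} By Lemma~\ref{lem.below}, for $A \in (0, A_{sub})$ the profile $B_A$ is a stationary subsolution of \eqref{eq.resc}. Unwinding the rescaling \eqref{resc.1}-\eqref{resc.2} produces the subsolution
$$
\bar u_A(t, x) := (e+t)^{-N\eta} (\log(e+t))^{-p\eta(N+1)} B_A\!\left( x(e+t)^{-\eta} (\log(e+t))^{(p-2)(N+1)\eta} \right)
$$
of \eqref{eq1} on $(0,\infty)\times\real^N$; by the translation invariance of \eqref{eq1}, $(t,x)\mapsto \bar u_A(t, x - x_*)$ is also a subsolution for any $x_* \in \real^N$.

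\textbf{Initial comparison and propagation.} Since $u_0$ is non-negative, continuous and non-trivial, there exist $x_* \in \real^N$ and $\alpha, \delta > 0$ with $u_0 \ge \alpha$ on $B(x_*, \delta)$. For $A \in (0, A_{sub})$ small enough, $\bar u_A(0, \cdot - x_*)$ has support in the ball $B(x_*, (A/B_0)^{(p-1)/p} e^\eta)\subseteq B(x_*, \delta)$ and sup-norm $e^{-N\eta} A^{(p-1)/(p-2)} \le \alpha$; hence $\bar u_A(0, \cdot - x_*) \le u_0$ pointwise, and the comparison principle for viscosity solutions of \eqref{eq1} (as used in \cite{BtL08}) gives
$$
u(t, x) \ge \bar u_A(t, x - x_*), \qquad t \ge 0,\ x \in \real^N.
$$

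\textbf{Derivation of \eqref{phl0} and \eqref{phl00}.} Evaluating the previous inequality at $x = x_*$ and using $B_A(0) = A^{(p-1)/(p-2)}$ gives
$$
\|u(t)\|_\infty \ge u(t, x_*) \ge A^{(p-1)/(p-2)} (e+t)^{-N\eta} (\log(e+t))^{-p\eta(N+1)}, \qquad t > 0,
$$
which is \eqref{phl0} with $A_0 := A^{(p-1)/(p-2)}$ (after possibly shrinking $A$ so that $A_0 \in (0, A_{sub})$). For \eqref{phl00}, it suffices to choose $t_0 > 0$ so that $\bar u_A(t_0, \cdot - x_*) \ge B_{A_0}(\cdot)$ pointwise; then $u(t_0, x) \ge \bar u_A(t_0, x - x_*) \ge B_{A_0}(x)$. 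For $t_0$ large, the support of $\bar u_A(t_0, \cdot - x_*)$ is a ball around $x_*$ of radius $(A/B_0)^{(p-1)/p}(e+t_0)^{\eta}(\log(e+t_0))^{-(p-2)(N+1)\eta}$ covering the support $B(0,(A_0/B_0)^{(p-1)/p})$ of $B_{A_0}$, and on this ball $\bar u_A(t_0, x - x_*)$ is close to the peak value $A^{(p-1)/(p-2)}(e+t_0)^{-N\eta}(\log(e+t_0))^{-p\eta(N+1)}$. A direct pointwise check then shows that, for $t_0$ large enough and $A$ correspondingly small (recalling $A_0 = A^{(p-1)/(p-2)}$), the desired inequality holds everywhere on $\real^N$, yielding \eqref{phl00}.

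\textbf{Main obstacle.} The quantitative balancing of $A$ and $t_0$ to ensure that a single $A_0 \in (0, A_{sub})$ works in both \eqref{phl0} and \eqref{phl00} is the delicate step. One must carefully track how the logarithmic-in-time decay of $\bar u_A(t_0, \cdot)$ competes with the growth of its support and the shape of $B_{A_0}$; the freedom to take $A$ arbitrarily small makes the choice possible but not automatic.
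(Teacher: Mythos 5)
Your construction of the subsolution, the translation to a point $x_*$ where $u_0$ is bounded below, the comparison principle, and the derivation of \eqref{phl0} by evaluating at $x=x_*$ all coincide with the paper's argument (the paper phrases the comparison in the rescaled variables $(s,y)$ via Lemma~\ref{lem.below}, but this is the same computation). The gap is in your treatment of \eqref{phl00}, specifically in locking $A_0=A^{(p-1)/(p-2)}$ and claiming that tuning $A$ small and $t_0$ large closes the argument. Quantify the two requirements: writing $M(t)=(e+t)^{-N\eta}\log(e+t)^{-p\eta(N+1)}$ and $\lambda(t)=(e+t)^{-\eta}\log(e+t)^{(p-2)(N+1)\eta}$, the support/shape condition (so that $\bar u_A(t_0,\cdot-x_*)\ge \tfrac{1}{2}A^{(p-1)/(p-2)}M(t_0)$ on the \emph{fixed} ball $\mathrm{supp}\,B_{A_0}$ centered at the origin, which is at distance $|x_*|$ from the center of your subsolution) forces $\lambda(t_0)\lesssim A^{(p-1)/p}/(1+|x_*|)$, hence $M(t_0)\lesssim A^{N(p-1)/p}$; while the amplitude condition $M(t_0)\,A^{(p-1)/(p-2)}\gtrsim A_0^{(p-1)/(p-2)}=A^{(p-1)^2/(p-2)^2}$ forces $M(t_0)\gtrsim A^{(p-1)/(p-2)^2}$. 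These are compatible as $A\to 0$ only if $N(p-2)^2\le p$, which fails for instance when $p=5$, $N=1$; and even for fixed $A$ they may be incompatible when $|x_*|$ is large. So the "freedom to take $A$ arbitrarily small" does not, by itself, make the choice possible.

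The paper sidesteps this entirely, and the fix is soft: both \eqref{phl0} and \eqref{phl00} only become easier as $A_0$ decreases, and the proposition asks merely for \emph{some} $A_0\in(0,A_{sub})$, so $A_0$ need not be tied to $A$. The paper uses the propagated subsolution only to conclude that $u(t_0,0)>0$ for some sufficiently large $t_0$ (the argument of $B_A$ at $x=0$ tends to $0$ as $t\to\infty$, so it eventually lies inside the positivity set of $B_A$). Then continuity of $u(t_0,\cdot)$ gives $u(t_0,\cdot)\ge\varepsilon_1$ on some ball $B(0,\rho_1)$, and one chooses $A_0\le\min\{\varepsilon_1^{(p-2)/(p-1)},\,B_0\rho_1^{p/(p-1)},\,A^{(p-1)/(p-2)},\,A_{sub}/2\}$ so that $B_{A_0}$ fits under $u(t_0,\cdot)$ while \eqref{phl0} still holds. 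Replacing your final "direct pointwise check" by this decoupled choice of $A_0$ repairs the proof.
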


\begin{proof}
Since $u_0\not\equiv 0$ and is continuous by \eqref{initdata1},
there are $x_0\in\real^N$, $\varrho_0>0$, and $\varepsilon_0>0$ such
that
\begin{equation}
u_0(x) \ge \varepsilon_0\ , \qquad x\in B(x_0,\varrho_0)\ . \label{phl1}
\end{equation}
Introducing $\tilde{u}_0(x) := u_0(x+x_0)$, $x\in\real^N$, and
denoting the corresponding solutions to \eqref{eq1} and
\eqref{eq.resc} by $\tilde{u}$ and $\tilde{w}$, respectively, we
observe that the invariance of \eqref{eq1} with respect to
translations entails that $\tilde{u}(t,x)=u(t,x+x_0)$, while we
infer from \eqref{phl1} that
\begin{equation}
\tilde{u}_0(x) \ge B_A(x)\ , \qquad x\in \real^N\ , \label{phl2}
\end{equation}
provided
\begin{equation}
A \le \varepsilon_0^{(p-2)/(p-1)} \;\;\text{ and }\;\; A \le B_0 \varrho_0^{p/(p-1)}\ . \label{phl3}
\end{equation}
Choosing $A\in (0,A_{sub})$ satisfying \eqref{phl3}, we deduce from Lemma~\ref{lem.below}, \eqref{phl2}, and the comparison principle that
$$
\tilde{w}(s,y) \ge B_A(y)\ , \quad (s,y)\in (1,\infty)\times\real^N\ .
$$
Coming back to $u$, we realize that
\begin{equation}
(e+t)^{N\eta} \log(e+t)^{p(N+1)\eta}\ u(t,x) \ge B_A\left( \frac{(x-x_0) \log(e+t)^{(p-2)(N+1)\eta}}{(e+t)^\eta} \right)\label{phl4}
\end{equation}
for $(t,x)\in (0,\infty)\times\real^N$. A first consequence of \eqref{phl4} with $x=x_0$ is the lower bound \eqref{phl0}. We also infer from \eqref{phl4} with $x=0$ that
$$
(e+t)^{N\eta} \log(e+t)^{p(N+1)\eta}\ u(t,0) \ge B_A\left( \frac{(x_0) \log(e+t)^{(p-2)(N+1)\eta}}{(e+t)^\eta} \right)
$$
and the right-hand side of the above inequality is positive provided
$t$ is large enough. Therefore there is $t_0>0$ such that
$u(t_0,0)>0$, and we argue as in the proof of \eqref{phl2} to
complete the proof of \eqref{phl00}, possibly taking a lower value
of $A$ if necessary.
\end{proof}

\section{Convergence. Proof of Theorem~\ref{th.1}}\label{sec.final}

Thanks to the outcome of Sections~\ref{sec.sd} and~\ref{sec.scaling} we are in a position to prove Theorem~\ref{th.1}. To this end, we follow the lines of the analysis developed by Galaktionov \& V\'azquez in \cite{GVaz, GVazBook}, the central tool being a stability theorem which is recalled in Section~\ref{sec.stabth} for the reader's convenience.

We fix an initial condition $u_0$ satisfying \eqref{initdata1} and denote the corresponding solution to \eqref{eq1}-\eqref{eq1in} by $u$. By Proposition~\ref{pr.lowerbound}, there is $t_0>0$ and $A_0\in (0,A_{sub})$ such that
\begin{equation}
u(t_0,x) \ge B_{A_0}(x)\ , \qquad x\in\real^N\ . \label{z1}
\end{equation}
We then define $w$ by \eqref{resc.1}-\eqref{resc.2} with $u(\cdot+t_0)$ instead of $u$, that is,
\begin{equation*}
u(t+t_0,x)=(e+t)^{-N\eta}\log(e+t)^{-p\eta(N+1)} w(s,y)\ ,
\end{equation*}
the variables $(s,y)$ being still given by \eqref{resc.1}. We infer from \eqref{z1}, Lemma~\ref{lem.above}, Lemma~\ref{lem.below}, and the comparison principle that, for all $s\ge 1$,
\begin{equation}
\|w(s)\|_{1} + \|w(s)\|_{\infty} + \|\nabla w\|_{\infty} \leq C_6\ , \label{z3}
\end{equation}
and
\begin{equation}
w(s,y) \ge B_{A_0}(y)\ , \quad y\in \real^N\ , \;\;\text{ and } \;\; w(s,y) = 0\ , \quad |y|\ge R_0\ . \label{z4}
\end{equation}

We define the set
$$
X := \left\{ z\in L^1(\real^N)\cap BC(\real^N)\ :\ z(y) \ge B_{A_0}(y)\ , \ y\in \real^N \;\;\text{ and }\;\; \|z\|_1 \le C_6 \right\}\ ,
$$
which is a complete metric space for the distance induced by the $L^\infty$-norm, the parameters $C_6$ and $A_0$ being given in \eqref{z3} and \eqref{z4}, respectively. We also set
$$
\mathcal{S} := \{ w \}\ ,
$$
and deduce from \eqref{resc.1}, \eqref{z3}, \eqref{z4}, and the properties of $u$ that $w\in C([0,\infty); X)$. We now check that the set $\mathcal{S}$ enjoys the three properties \textbf{(H1)}-\textbf{(H3)} required to apply the stability result from \cite{GVaz, GVazBook} recalled in Theorem~\ref{th.stab} below. In our setting, the non-autonomous operator is the operator $\mathcal{L}$ defined in \eqref{eq.rescL} and its autonomous counterpart $L$ is defined in \eqref{eq.autonL}, the associated evolution equations being \eqref{eq.resc} and \eqref{eq.auton}, respectively.

Clearly, $w$ is a solution to \eqref{eq.resc} and it readily follows from the $W^{1,\infty}$-bound \eqref{z3}, the uniform localization of the support \eqref{z4}, and the Arzel\'a-Ascoli theorem that $\{ w(s)\}_{s\ge 0}$ is compact in $L^\infty(\real^N)$, so that \textbf{(H1)} is satisfied.

We next infer from the same properties \eqref{z3} and \eqref{z4} that, for $(s,y)\in(0,\infty)\times\real^N$,
\begin{align*}
\left|\mathcal{L}w(s,y)-Lw(s,y)\right| & \le \frac{C}{s} \left[ \|\nabla w(s)\|_\infty^{q_*} + \|w(s)\|_\infty + |y| \|\nabla w(s)\|_\infty \right] \\
& \le \frac{C}{s} \left[ C_6^{q_*} + C_6 + R_0 C_6 \right]\ .
\end{align*}
Therefore,
\begin{equation*}
\left\| \mathcal{L}w(s)-Lw(s) \right\|_\infty \leq\frac{C_7}{s}\ , \quad s>0\ , 
\end{equation*}
from which \textbf{(H2)} follows.

Finally, we fix $A_1\in (A_0,\infty)$ such that $A_1>B_0 R_0^{p/(p-1)}$ and $\| B_{A_1}\|_1 \ge C_6$ and set
\begin{equation*}
\Omega := \left\{ B_A\ : \ A\in [A_0,A_1] \right\}\ . 
\end{equation*}
Clearly $\Omega$ is a non-empty and compact subset of $X$. Since \textbf{(H3)} concerns only the autonomous operator \eqref{eq.auton}, which also arises from the standard $p$-Laplacian equation via a self-similar change of variable, the uniform stability of $\Omega$ holds true as a by-product of classical results for the $p$-Laplacian equation (see \cite[Section~6]{GVaz} and also \cite[Section
4.6]{GVazBook}).

We have thus checked the validity of \textbf{(H1)}-\textbf{(H3)} and may thus apply the stability theorem (Theorem~\ref{th.stab}) to conclude that the $\omega$-limit set (for the topology of the uniform convergence) of $w$ is included in $\Omega$, that is,
\begin{equation}
\omega(w) := \left\{
\begin{array}{l}
\bar{w} \in X\ :\ \text{ there is a sequence } (s_j)_{j\ge 1}\ , s_j\to\infty\ ,\\
\text{ such that } \lim_{j\to \infty} \|w(s_j)-\bar{w} \|_\infty = 0
\end{array}
 \right\} \subset \Omega\ . \label{z7}
\end{equation}

\medskip

\textbf{Mass analysis. Uniqueness of the limit.} It remains to show
that the asymptotic profile is in fact a uniquely determined Barenblatt profile
with the parameter $A_*$ as in Theorem~\ref{th.1}. To this end, we
perform a mass analysis along the lines of a similar argument in \cite{GVaz}. We first observe that classical properties of the Barenblatt profiles ensure that, given $s\ge 0$, there is a unique $A(s)>0$ such that
\begin{equation}
\Theta(s):=\|w(s)\|_{1} = \|B_{A(s)}\|_1\ . \label{7}
\end{equation}
More precisely,
\begin{equation}\label{interm8}
\Theta(s) = \frac{N}{N+1} \omega_N I_1 A(s)^{(p-1)/[p(p-2)\eta]}\ ,
\end{equation}
where
\begin{equation}\label{interm9}
I_1 := (N+1)
\int_0^{\infty}\left(1-B_0r^{p/(p-1)}\right)_+^{(p-1)/(p-2)}r^{N-1}\,dr
\end{equation}
and $\omega_N$ denotes the volume of the unit ball of $\real^N$, see \cite[Section 11.4.1]{VazquezSmoothing} for instance. Since
$$
\|B_{A_1}\|_1 \ge C_6 \ge \Theta(s) \ge \|B_{A_0}\|_1\ , \quad s\ge 0\ ,
$$
by \eqref{z3}, \eqref{z4}, and the choice of $A_1$, we deduce from the monotonicity of $A\mapsto \|B_A\|_1$ and \eqref{7} that
\begin{equation}
A_0 \le A(s) \le A_1\ , \quad s\ge 0\ . \label{z8}
\end{equation}
In addition, integrating \eqref{eq.resc} with respect to space shows that, for $s\ge 0$,
\begin{equation}\label{interm7}
\frac{d\Theta}{ds}(s)=\frac{G(w(s))}{s} \;\;\text{ with }\;\; G(z) := (N+1) \|z\|_1 - \|\nabla z\|_{q_*}^{q_*}\ .
\end{equation}
Owing to \eqref{interm7}, \eqref{interm8}, and the regularity properties of $w$, we realize that $A\in C([0,\infty))\cap C^1((0,\infty))$.

We next claim that
\begin{equation}
\lim_{s\to\infty} \| w(s) - B_{A(s)}\|_\infty = 0\ . \label{z9}
\end{equation}
Indeed, assume for contradiction that there are an increasing sequence $(s_j)_{j\ge 1}$ of positive real numbers, $s_j\to\infty$, and $\varepsilon>0$ such that
\begin{equation}
\| w(s_j) - B_{A(s_j)}\|_\infty \ge \varepsilon\ , \quad j\ge 1\ . \label{z10}
\end{equation}
On the one hand, we infer from \eqref{z3}, \eqref{z4}, \eqref{z7}, and the Arzel\'a-Ascoli theorem that there are a subsequence of $(s_j)_{j\ge 1}$ (not relabeled) and $\bar{A} \in [A_0,A_1]$ such that
$$
\lim_{j\to\infty} \|w(s_j)-B_{\bar{A}}\|_\infty = 0\ .
$$
On the other hand, it follows from \eqref{z8} that, after possibly extracting a further susbequence, we may assume that there is $A_\infty\in [A_0,A_1]$ such that $A(s_j)\to A_\infty$ as $j\to \infty$. This readily implies that
$$
\lim_{j\to\infty} \|B_{A(s_j)}-B_{A_\infty}\|_\infty = 0\ .
$$
We may then let $j\to\infty$ in \eqref{z10} to conclude that
\begin{equation}
\|B_{\bar{A}} - B_{A_\infty}\|_\infty \ge \varepsilon\ . \label{z11}
\end{equation}
Now,
$$
\| B_{\bar{A}} \|_1 = \lim_{j\to \infty} \Theta(s_j) = \lim_{j\to\infty} \left\| B_{A(s_j)} \right\|_1 = \| B_{A_\infty} \|_1\ ,
$$
so that $\bar{A}= A_\infty$, which contradicts \eqref{z11}. We have
thus proved \eqref{z9}.

We next infer from \eqref{z3} and \eqref{z4} that, introducing
\begin{align*}
f(s,y) & := \eta y\cdot \nabla w(s,y) + \eta N w(s,y) \\
& \quad - \frac{1}{s} \left[ |\nabla w(s,y)|^{q_*} - p \eta (N+1) w(s,y) + (p-2) \eta (N+1) y\cdot \nabla w(s,y) \right]
\end{align*}
for $(s,y)\in (0,\infty)\times \real^N$, Eq.~\eqref{eq.resc} reads
$$
\partial_s w - \Delta_p w = f \;\;\text{ in }\;\; (1,\infty)\times B(0,R_0+1)
$$
with $w\in L^\infty(1,\infty; W^{1,\infty}(B(0,R_0+1)))$ and $f\in L^\infty((1,\infty)\times B(0,R_0+1))$. We then infer from \cite[Theorem~1.1]{DBF85} that there are $C_8>0$ and $\nu\in (0,1)$ such that
$$
|\nabla w(s_1,y_1) - \nabla w(s_2,y_2)| \le C_8 \left( |y_1-y_2|^\nu + |s_1-s_2|^{\nu/2} \right)
$$
for all $s_2\ge s_1 \ge 2$ and $(y_1,y_2)\in B(0,R_0)\times B(0,R_0)$. Combining the above property with \eqref{z3} we deduce that $\{\nabla w(s)\}_{s\ge 1}$ is bounded and equicontinuous in $C(B(0,R_0);\real^N)$ and thus compact in that space by the Arzel\'a-Ascoli theorem. Recalling \eqref{z4} and \eqref{z9} we conclude that
\begin{equation}
\lim_{s\to\infty} \left( \|w(s)-B_{A(s)}\|_r + \| \nabla w(s) - \nabla B_{A(s)}\|_r\right) = 0\ , \quad r\in [1,\infty]\ . \label{z12}
\end{equation}
An immediate consequence of \eqref{z12} is that
\begin{equation}
\lim_{s\to\infty} \left| G(w(s)) - G(B_{A(s)}) \right| = 0\ , \label{z13}
\end{equation}
while the explicit formula for $B_{A(s)}$ gives
\begin{equation}
G(B_{A(s)}) = g(A(s)) \label{z14}
\end{equation}
with
$$
g(a) := N\omega_N I_1 a^{(p-1)/\eta p(p-2)} - N \omega_N I_2 a^{(N+2)(p-1)/(N+1)\eta p(p-2)}\ , \quad a>0\ ,
$$
where $I_1$ is defined in \eqref{interm9}, and
$$
I_2:=\eta^{q/(p-1)}\int_0^{\infty}\left(1-B_0r^{p/(p-1)}\right)_+^{q/(p-2)}r^{((p-1)(N-1)+q)/(p-1)}\,dr\ .
$$
We observe that $g$ vanishes only once in $(0,\infty)$, for $a=A_*$,
which is defined in \eqref{interm0} and reads
$$
A_*:=\left(\frac{I_1}{I_2}\right)^{p(p-2)(N+1)\eta/(p-1)}\ ,
$$
with the notation introduced in this section. In fact,
\begin{equation}
g(a)<0 \;\;\text{ for }\;\; a>A_* \;\;\text{ and }\;\; g(a)>0 \;\;\text{ for }\;\; a\in (0,A_*)\ . \label{z15}
\end{equation}

Thanks to \eqref{z7}, \eqref{interm7}, \eqref{z12}, \eqref{z13},\eqref{z14}, and
\eqref{z15}, we are in a position to proceed as in the proof of
\cite[Proposition~5.2]{GVaz} to establish that $\omega(w)=\{ B_{A_*}
\}$. Undoing the rescaling \eqref{resc.1}-\eqref{resc.2}, this
property readily gives Theorem~\ref{th.1}.

We finally note that Proposition~\ref{pr.posexp} follows at once from \eqref{z4} after translating these properties in terms of $u$.

\appendix
\section{The stability theorem}\label{sec.stabth}

We briefly recall here for the reader's convenience the stability
theorem introduced by Galaktionov and V\'azquez in \cite{GVaz,
GVazBook} and used in Section~\ref{sec.final}. As a general
framework, consider a non-autonomous evolution equation
\begin{equation}\label{eq.pert}
\partial_s \vartheta = \mathcal{L}\vartheta\ ,
\end{equation}
that can be seen as a \emph{small perturbation} of an autonomous
evolution equation with good asymptotic properties
\begin{equation}\label{eq.unpert}
\partial_s\Phi = L\Phi\ ,
\end{equation}
in the sense described below. We consider a set $\mathcal{S}$ of solutions $\vartheta\in C([0,\infty);X)$ to \eqref{eq.pert} with values in a complete metric space $(X,d)$. We assume that:

\begin{itemize}
\item[\textbf{(H1)}] For each $\vartheta\in\mathcal{S}$, the orbit $\{\vartheta(t)\}_{t>0}$ is relatively compact in $X$. Moreover, if we let
$$
\vartheta^{\tau}(t):=\vartheta(t+\tau)\ , \quad t\ge 0\ , \quad
\tau>0,
$$
then $\{\vartheta^{\tau}\}_{\tau>0}$ is relatively compact in
$C([0,T];X)$ for any $T>0$.

\item[\textbf{(H2)}] Let $\vartheta\in\mathcal{S}$ for which there is a sequence of positive times $(t_k)_{k\ge 1}$, $t_{k}\to\infty$, such that $\vartheta(\cdot+t_k) \longrightarrow \Theta$ in $C([0,T];X)$ as $k\to\infty$ for any $T>0$. Then $\Theta$ is a solution to \eqref{eq.unpert}.

\item[\textbf{(H3)}] Define the $\omega$-limit set $\Omega$ of \eqref{eq.unpert} in $X$ as the set of $f\in X$ enjoying the following property: there are a solution $\Phi\in C([0,\infty);X)$ to \eqref{eq.unpert} and a sequence of positive times $(t_k)_{k\ge 1}$ such that $t_k\to\infty$ and $\Phi(t_k)\longrightarrow f$ in $X$. Then $\Omega$ is non-empty, compact and uniformly stable, that is: for any $\e>0$, there exists $\delta>0$ such that if $\Phi$ is a solution to \eqref{eq.unpert} with $d(\Phi(0),\Omega)\leq\delta$, then $d(\Phi(t),\Omega)\leq\e$ for any $t>0$.
\end{itemize}

The stability theorem (also known as the S-theorem) then reads:
\begin{theorem}\label{th.stab}
If \textbf{(H1)-(H3)} above are satisfied, then the $\omega$-limit
set of any solution $\vartheta\in \mathcal{S}$ is contained in $\Omega$.
\end{theorem}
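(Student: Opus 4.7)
The plan is to pick an arbitrary $f\in\omega(\vartheta)$ and show that $f\in\Omega$, relying successively on each of the three hypotheses.

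I would first extract an autonomous orbit through $f$. By definition of $\omega(\vartheta)$, there is a sequence $t_k\to\infty$ such that $\vartheta(t_k)\to f$ in $X$. Hypothesis \textbf{(H1)} guarantees the relative compactness of the shifted orbits $\{\vartheta(t_k+\cdot)\}_k$ in $C([0,T];X)$ for every $T>0$, so a diagonal extraction over $T=1,2,\ldots$ produces, along a subsequence (not relabelled), a continuous function $\Theta\in C([0,\infty);X)$ with $\vartheta(t_k+\cdot)\to\Theta$ in $C([0,T];X)$ for each $T>0$. Hypothesis \textbf{(H2)} then identifies $\Theta$ as a solution of the autonomous equation \eqref{eq.unpert} with $\Theta(0)=f$. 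Since $\Theta(t)=\lim_k\vartheta(t_k+t)$ and $t_k+t\to\infty$, each point $\Theta(t)$ lies in $\omega(\vartheta)$, so that $\omega(\vartheta)$ is positively invariant under the autonomous semigroup and contains the entire forward orbit of $f$.

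Next, I would show that this forward orbit enters every neighbourhood of $\Omega$. The bounded autonomous orbit $\{\Theta(t)\}_{t\geq 0}$ lies in the relatively compact set $\omega(\vartheta)$, so its autonomous $\omega$-limit set $\omega(\Theta)$ is non-empty and, by the very definition of $\Omega$ in \textbf{(H3)}, contained in $\Omega$. This yields $d(\Theta(t),\Omega)\to 0$ as $t\to\infty$; in particular, for any $\delta>0$ there exists $T=T(\delta)>0$ with $d(\Theta(T),\Omega)<\delta$.

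Finally, I would close the argument by invoking uniform stability. Given $\varepsilon>0$, let $\delta\in(0,\varepsilon)$ be furnished by \textbf{(H3)}, select $T$ with $d(\Theta(T),\Omega)<\delta/2$, and then $k_0$ such that $d(\vartheta(t_k+T),\Theta(T))<\delta/2$ for $k\geq k_0$; hence $d(\vartheta(t_k+T),\Omega)<\delta$ whenever $k\geq k_0$. Reapplying \textbf{(H1)}--\textbf{(H2)} to $\{\vartheta(t_k+T+\cdot)\}_k$ produces subsequential autonomous limits starting within $\delta$ of $\Omega$, and \textbf{(H3)} then forces each of them to stay within $\varepsilon$ of $\Omega$ for all positive times. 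A standard first-exit-time contradiction argument, based on this approximation, lifts this bound to all sufficiently large times of $\vartheta$, giving $d(\vartheta(s),\Omega)\leq\varepsilon$ eventually; passing to the limit along $s=t_k\to\infty$ yields $d(f,\Omega)\leq\varepsilon$, so that $f\in\Omega$ since $\varepsilon>0$ is arbitrary and $\Omega$ is closed.

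The hard part will be this last step. Uniform stability in \textbf{(H3)} is a statement about solutions of the autonomous equation, whereas $\vartheta$ solves the non-autonomous perturbation, so \textbf{(H3)} cannot be applied directly. The bridge is provided by \textbf{(H1)}--\textbf{(H2)}, which ensure that on each compact interval $\vartheta(s+\cdot)$ is well approximated by an autonomous solution once $s$ is large. Turning this ``closeness on each compact interval'' into ``closeness for all large times'' is exactly the heart of the S-theorem, and requires a careful diagonal extraction together with a first-exit-time contradiction, as carried out in \cite{GVaz,GVazBook}.
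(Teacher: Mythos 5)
The paper does not actually prove Theorem~\ref{th.stab}: the appendix states it and refers to \cite{GVaz, GVazBook}, so there is no in-paper argument to compare yours against step by step. Your first two stages are correct and complete: the diagonal extraction of a limit $\Theta$ of the shifts $\vartheta(t_k+\cdot)$ via \textbf{(H1)}, its identification as an autonomous solution of \eqref{eq.unpert} with $\Theta(0)=f$ via \textbf{(H2)}, the inclusion $\omega(\Theta)\subset\Omega$ straight from the definition of $\Omega$ in \textbf{(H3)}, and the deduction $d(\Theta(t),\Omega)\to 0$ from compactness of the orbit. This correctly produces times $t_k+T$ at which $d(\vartheta(t_k+T),\Omega)<\delta$.

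The remaining step, however, is not merely ``standard'' bookkeeping, and as written it is not closed. From $d(\vartheta(t_k+T),\Omega)<\delta$ and the fact that subsequential autonomous limits of $\vartheta(t_k+T+\cdot)$ remain $\varepsilon$-close to $\Omega$, you only get $d(\vartheta(t_k+T+t),\Omega)\le 2\varepsilon$ for $t$ in a compact interval $[0,T']$ and $k\ge k(T')$, with $k(T')$ unbounded in $T'$; this controls neither all large times nor the original times $t_k$ (which precede $t_k+T$, so the stability of $\Omega$ at time $t_k+T$ says nothing about $f=\lim\vartheta(t_k)$). In the first-exit-time dichotomy, the bounded-exit-time case does contradict uniform stability directly, but the case where the exit times tend to infinity needs an extra idea: one must show the orbit re-enters the $\delta$-neighbourhood of $\Omega$ before escaping, which does not follow from uniform stability alone. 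A clean way to finish is to work inside $\omega(\vartheta)$ itself: \textbf{(H1)}--\textbf{(H2)}, applied to $\vartheta(t_k-r+\cdot)$ as well, show that $\omega(\vartheta)$ is compact and contains a complete two-sided orbit $\Psi$ of \eqref{eq.unpert} with $\Psi(0)=f$. If $d(f,\Omega)=\rho>0$, uniform stability applied to the forward autonomous solutions $t\mapsto\Psi(t-r)$, which reach $f$ at time $r$, forces $d(\Psi(-r),\Omega)\ge\delta(\rho/2)$ for every $r>0$; the $\alpha$-limit set of $\Psi$ is then a non-empty compact subset of $\omega(\vartheta)$ at positive distance from $\Omega$, yet it contains forward orbits of \eqref{eq.unpert} whose $\omega$-limit points must lie both in it and in $\Omega$ --- a contradiction. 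Supplying an argument of this type (or the complete exit-time analysis of \cite{GVaz, GVazBook}) is what is needed to turn your outline into a proof.
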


For a detailed proof we refer the reader to \cite{GVaz, GVazBook}.

\bibliographystyle{plain}


\end{document}